\newtheorem{lemma}{Lemma}[section]
\newtheorem*{thm*}{Theorem}
\newtheorem{prop}[lemma]{Proposition}
\newtheorem*{cor*}{Corollary}
\theoremstyle{definition}
\theoremstyle{remark}
\newtheorem{rem}[lemma]{Remark}
\newcommand{\matR} {\ensuremath {\mathbb{R}}}
\newcommand{\matQ} {\ensuremath {\mathbb{Q}}}
\newcommand{\matZ} {\ensuremath {\mathbb{Z}}}
\newcommand{\matH} {\ensuremath {\mathbb{H}}}
\newcommand{\Sym}{\ensuremath {\mathfrak{S}}}
\newcommand{\Alt}{\ensuremath {\mathfrak{A}}}
\newcommand{\Isom}{\ensuremath {\mathrm{Isom}}}
\newcommand{\orb}{{\mathrm{orb}}}
\newcommand{\Vol}{{\mathrm{Vol}}}
\newcommand{\id}{\mathrm{id}}
\newcommand{\Aut}{\mathrm{Aut}}
\newcommand{\diag}{\mathrm{diag}}
\newcommand{\w}[1]{{\color{white} #1}}
\newcommand\xleftrightarrow[2][]{\ext@arrow 0099{\longleftrightarrowfill@}{#1}{#2}}
\def\longleftrightarrowfill@{\arrowfill@\leftarrow\relbar\rightarrow}
\author{Stefano Riolo}
\address{Dipartimento di Matematica, Università di Bologna \newline  Piazza di Porta San Donato 5, 40126 Bologna, Italy}
\email{stefano\w{0}.\w{0}riolo\w{0}@unibo.it}
\title{A small cusped hyperbolic 4-manifold}
\subjclass[2010]{57M50}
\thanks{The author has been partially supported by %the MIUR–PRIN project %%\href{https://prin.mur.gov.it/Ricerca?Filtro.Anno=2017&Filtro.Ateneo=\%25&Filtro.Argomento=&Filtro.Cognome=Bracci}
%{2017JZ2SW5} and
the SNSF ``Ambizione'' grant PZ00P2-193559.} %\href{https://data.snf.ch/grants/grant/193559}{PZ00P2-193559}.}
\begin{document}

\begin{abstract}
%We build a cusped hyperbolic 4-manifold of twice the minimal volume that is not commensurable with the previously known examples. This is done by gluing some copies of a polytope of Kerckhoff and Storm's.
%
By gluing some copies of a polytope of Kerckhoff and Storm's, we build the smallest known orientable hyperbolic 4-manifold that is not commensurable with the ideal 24-cell or the ideal rectified simplex. It is cusped and arithmetic, and has twice the minimal volume. %It is built by gluing some copies of a polytope of Kerckhoff and Storm's.
%
%The purpose of this note is to show that there is a cusped, orientable, hyperbolic 4-manifold of twice the minimal volume that is not commensurable with the known manifolds of minimal volume. The manifold is built by gluing some copies of a polytope of Kerckhoff and Storm's.
\end{abstract}

\maketitle

\section*{Introduction}

There is a natural interest in hyperbolic manifolds of low volume, and this note addresses dimension four; see the survey \cite{M_survey}.

Hyperbolic manifolds in the paper are understood to be complete and of finite volume. 
Two manifolds (or orbifolds) are \emph{commensurable} if they are both finitely covered by a third manifold (or orbifold). Recall also that the generalised Gauss--Bonnet formula relates the volume of a hyperbolic 4-manifold $M$ to its Euler characteristic $\chi(M)$ as follows:
$$\Vol(M) = \frac{4\pi^2}{3} \chi(M).$$
%Moreover, by Wang's finitess, there are at most finitely-many hyperbolic 4-manifolds of uniformly bounded volume. As shown by Ratcliffe and Tschantz \cite{RT}, every postive integer is the Euler characteristic of a cusped orientable hyperbolic 4-manifold.
%
%We refer to %Martelli's
%the survey \cite{M_survey} for more details on finite-volume hyperbolic 4-manifolds.

The smallest known closed hyperbolic 4-manifolds are non-orientable and have $\chi = 8$ \cite{CM,L}. These and the few other explicit examples of closed manifolds %, say with $\chi < 200$,
that we could find in the literature \cite{D,GS,MZ} are tessellated by $2\pi/k$-angled 120-cells for $k = 3, 4, 5$, and are arithmetic and commensurable \cite{JKRT}. 

%%%The examples of hyperbolic 4-manifolds with $\chi < 8$ that we found are cusped, and lie into four commensurability classes: 
%%The %(much more numerous)
%%explicit examples of cusped hyperbolic 4-manifolds 
%%that we could find in the literature
%The known cusped manifolds with $\chi < 200$ lie, instead, into four commensurability classes: 
Concerning cusped manifolds, we found instead four commensurability classes: (we cite here the papers containing the smallest known manifolds in each class)

\begin{enumerate}
\item \label{it:1} $\chi = 1$ both orientable and not, arithmetic \cite{RT,E,S_low,RT_1cusp,KRT}; 
\item \label{it:2} $\chi = 1$ both orientable and not, arithmetic \cite{%KS_symm,
S_fig8,RS,KRR,CK}; 
\item \label{it:3} $\chi = 2$ non-orientable %and so $\chi = 4$ orientable
(so also $\chi = 4$ orientable), arithmetic \cite{MR,RS}; %and
\item \label{it:4} $\chi = 3$ non-orientable and $\chi = 5$ orientable, non-arithmetic \cite{RS}.
\end{enumerate}
Each of these classes is represented by a hyperbolic Coxeter 4-polytope, for instance \eqref{it:1} by %Moreover, \eqref{it:1} and \eqref{it:2} are represented by orientable manifolds of minimal volume \cite{RT,S_fig8,RS,KRR,KRT}
%.
the nine non-compact simplices, $\matH^4/\mathrm{PO}(4,1;\mathbb Z)$ and the ideal 24-cell% \cite{JKRT,RT}
, and \eqref{it:2} by the ideal rectified simplex% \cite{RS}
.

We find here another commensurability class of low-volume hyperbolic 4-manifolds:

\begin{thm*} \label{thm:main}
There exists an orientable, cusped, arithmetic, hyperbolic $4$-mani\-fold $M$ with $\chi(M) = 2$, %and not commensurable with \eqref{it:1}, \eqref{it:2}, \eqref{it:3}, nor \eqref{it:4}.
not belonging to any of the commensurability classes %\eqref{it:1}, \eqref{it:2}, \eqref{it:3}, \eqref{it:4}.
above.
\end{thm*}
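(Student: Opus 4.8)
The plan is to realise $M$ by gluing finitely many isometric copies of the finite-volume hyperbolic Coxeter $4$-polytope $P$ considered by Kerckhoff and Storm. Recall that $P\subset\matH^4$ has finitely many ideal vertices (together with some ordinary ones), that all of its dihedral angles are of the form $\pi/m$, and that its reflection group $\Gamma_P<\PO(4,1)$ is arithmetic; write $\Vol(P)=\tfrac{4\pi^2}{3}\,v$. First I would fix an orientation on $P$ and exhibit an explicit facet pairing on a disjoint union of $k$ copies of $P$, with $kv=2$: every facet is isometrically identified with exactly one other facet, every identification reverses orientation, and the pairing is tailored to the combinatorics of $P$ so that Poincaré's polyhedron theorem applies and produces a complete hyperbolic $4$-manifold $M$.

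The bulk of the work is in verifying the hypotheses of Poincaré's theorem, and this is the step I expect to be the main obstacle in practice. Around every ridge (codimension-$2$ face) the pairing generates a cycle of copies of $P$, and one must check that the dihedral angles along each such cycle sum to $2\pi$ and that the cycle transformation fixes the ridge; since the angles are submultiples of $\pi$, this is a finite combinatorial verification. One must likewise check that around every lower-dimensional face the copies fit together so that the corresponding link is a sphere at the ordinary faces and vertices, and a \emph{closed flat $3$-manifold} — not merely a flat orbifold — at each ideal vertex, with complete induced similarity structure; this is what makes $M$ a cusped hyperbolic $4$-manifold. Orientability is then automatic, since all the identifying isometries were chosen orientation-reversing, which can be arranged by a suitable choice of the pairing and of the gluing maps. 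Producing a single facet pairing that meets all of these constraints at once — ridge cycles, manifold cusp links, orientation-reversibility — is the combinatorial core of the construction, and in practice is likely to require a computer-assisted search through the pairings of the $k$ copies.

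Granting such a gluing, the numerical assertions are immediate. By the generalised Gauss--Bonnet formula, $\chi(M)=\tfrac{3}{4\pi^2}\Vol(M)=\tfrac{3}{4\pi^2}\cdot k\Vol(P)=kv=2$. Arithmeticity passes from $P$ to $M$: since $M$ is tiled by copies of $P$, its fundamental group is commensurable with the Coxeter group $\Gamma_P$, which is arithmetic; being cusped, $M$ is then of simplest type over $\matQ$, i.e.\ commensurable with $\PO(f;\matZ)$ for a quadratic form $f$ of signature $(4,1)$ over $\matQ$ that one reads off from $P$. Identifying this $f$ explicitly from the Gram data of $P$ is the only slightly delicate point here.

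It remains to separate $M$ from the four commensurability classes listed above. Class~\eqref{it:4} is non-arithmetic, and arithmeticity is a commensurability invariant, so $M$ cannot belong to it. Classes~\eqref{it:1}, \eqref{it:2} and \eqref{it:3} are arithmetic and cusped, hence of simplest type over $\matQ$, where the commensurability class is completely determined by the $\matQ$-similarity class of the defining quadratic form. I would therefore compute the Hasse--Minkowski data of $f$ — its discriminant modulo squares and its Hasse symbols at the relevant primes — and compare with the corresponding forms of $\PO(4,1;\matZ)$ / the ideal $24$-cell, of the ideal rectified simplex, and of the class~\eqref{it:3} examples, checking that $f$ is similar to none of the three; this is a routine computation once $f$ is in hand. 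As an independent check one may also compare the similarity types of the flat $3$-manifolds occurring as cusp cross-sections of $M$ with those forced in each of the three classes. Once $f$ is seen to be new, $M$ lies outside all four classes, and the theorem follows.
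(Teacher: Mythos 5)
Your high-level skeleton is right: glue copies of a Kerckhoff--Storm polytope so that Poincar\'e's polyhedron theorem applies, get orientability from orientation-reversing pairings, read off $\chi$ from Gauss--Bonnet, inherit arithmeticity from the reflection group, and separate commensurability classes by a Hasse--Minkowski-type invariant of the underlying $\matQ$-quadratic form (the paper uses Maclachlan's ramification set, computed to be $\{2,7\}$ against $\emptyset$, $\{3,\infty\}$, $\{2,5\}$ for classes \eqref{it:1}--\eqref{it:3}, with \eqref{it:4} discarded by non-arithmeticity — essentially your argument). So the numerical and arithmetic parts of your outline do match the paper.

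The genuine gap is exactly where you flag it yourself: you leave the construction of the facet pairing to ``a computer-assisted search through the pairings of the $k$ copies,'' and that is the entire content of the paper. The paper does \emph{not} search over pairings of copies of the Coxeter polytope $Q$ ($\chi^\orb(Q)=1/60$, so $k=120$ — a hopeless search space). Instead it works with the larger polytope $P=\bigcup_{g\in G_I} g(Q)$, made of $24$ copies of $Q$, whose key feature is that almost all its dihedral angles are $\pi/2$, with only a few $2\pi/3$ ridges, and whose isometry group is $\matZ/2\matZ\times\Sym_4$ (including the antipodal map $a$). The construction is then a structured two-step gluing: (i) glue the ``extremal'' facets of $5$ copies of $P$ around a vertex, in the pattern of $\partial\Delta^4$, killing all $2\pi/3$ angles at once and producing a hyperbolic $4$-manifold $X$ with pure right-angled corners whose boundary is bicoloured into two unions of facets $H$ and $C$; (ii) close $X$ up by gluing $H$ to itself and $C$ to itself via two commuting, fixed-point-free, orientation-reversing isometric involutions $h=a\circ r_{12}$, $c=a\circ r_{34}$. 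The right-angled-corner structure plus bicolouring plus commuting involutions is what replaces your ridge-by-ridge verification of Poincar\'e's hypotheses: each corner cycle automatically has length $4$ with trivial return map, and the cusp links are $[0,1]^2$-bundles over $S^1$, hence closed flat $3$-manifolds. Without this idea — or some other explicit mechanism — your proposal does not actually produce the manifold, only describes what properties it should have.

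A smaller but real discrepancy: you write $\Vol(P)=\tfrac{4\pi^2}{3}v$ with $kv=2$ as if $P$ were the Coxeter polytope itself; the paper carefully distinguishes the Coxeter polytope $Q$ from the bigger $P$ ($\chi^\orb(P)=24/60=2/5$), and the $5$ copies of $P$ give $\chi(M)=5\cdot 2/5=2$. Conflating the two hides why the construction is tractable.
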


The manifold $M$ is commensurable with a Coxeter polytope $Q$ belonging to a continuous family of hyperbolic 4-polytopes discovered in 2010 by Kerckhoff and Storm \cite{KS}. Notably, classes \eqref{it:1} and \eqref{it:3} are represented by other Coxeter polytopes of the family, and the examples in \eqref{it:4} are hybrids of manifolds in \eqref{it:3} and \eqref{it:2}.
Our method applies to two additional Coxeter polytopes of the family, but giving less interesting examples from the viewpoint of this paper (see Section \ref{sec:same-method}).

%\subsection*{The construction}

We build $M$ explicitly, by gluing together some copies of a bigger polytope $P$ of Kerckhoff and Storm's (%the smaller
$Q$ is a quotient of $P$). %This is done in such a way to reduce the issue to a lower-dimensional problem.
The results in \cite{MR,RS,RSep} are obtained in the same spirit, by gluing polytopes of the Kerckhoff--Storm family as well.

The polytope $P$ has volume $(2/5) \cdot (4\pi^2/3)$ and octahedral symmetry. It has a few %ridges (i.e. 2-faces)
2-faces with dihedral angle $2\pi/3$, while the remaining %ridges
%2-faces
ones are right angled. These are the main features of $P$ that will be exploited.

As a first step, we ``kill'' in a %particularly %the most
natural way the $2\pi/3$ angles by gluing in pairs some facets %(i.e. 3-faces)
of 5 copies of $P$, to get a particularly symmetric hyperbolic manifold $X$ with right-angled corners. These objects have been fruitfully used in four-dimensional hyperbolic geometry in the very last years \cite{M,MRS_1,MRS_2,BFS}.

%A study of $X$ and its symmetries %(which are 240: quite a big number if one takes into account the small volume!)
%then allows us to build $M$, by closing $X$ up via two commuting, fixed-point free, isometric involutions of $\partial X$ as gluing maps: one for some facets and one for the other facets. 
A study of $X$ and its symmetries %(which are 240: quite a big number if one takes into account the small volume!)
then allows us to build $M$ as follows. The facets of $X$ are  hyperbolic 3-manifolds with geodesic boundary (the corners), and are divided in two types. The corners, some punctured surfaces, are always the intersection of two facets of different type. We then close $X$ up via two commuting, fixed-point free, isometric involutions of $\partial X$ as gluing maps: one for each type of facet.

By construction, $M$ is commensurable with the orbifold $Q$. Being $Q$ arithmetic, Maclachlan's work \cite{Mac} allows us to distinguish its commensurability class.
One similarly gets a few non-orientable manifolds with $\chi = 2$ in the same class (Remark \ref{rem:non-or}).

\subsection*{Structure of the paper}

%The paper is organised as follows: w
The polytopes $Q$ and $P$
are introduced in Section \ref{sec:polytopes}, while the %construction of
manifolds $X$ and $M$ %is performed 
are built in Section \ref{sec:manifolds}. 

\subsection*{Acknowledgements}

I am grateful to Leone Slavich for several discussions on the topic. %Special thanks to the Sancasciani community for the wonderful atmosphere that accompanied part of this work. %vibe

\section{The polytopes} \label{sec:polytopes}

We introduce here two polytopes from \cite{KS}: the Coxeter polytope $Q$ in Section \ref{sec:Q}, and the bigger polytope $P$ in Section \ref{sec:P}. The commensurability class of $Q$ is distinguished in Proposition \ref{prop:commens}.

We refer the reader to the book \cite{V} for the general theory of hyperbolic Coxeter polytopes and reflection groups, including arithmeticity.
%
%All the statements given in this section without explicit proof %(but anyway citing previous papers)
%may be directly checked with the notions in \cite{V}.
%The material in the book suffices to  directly check all the statements given in this section without explicit proof.

\subsection{The Coxeter polytope} \label{sec:Q}

\begin{figure}
\begin{center}
\begin{tikzpicture}[-,>=stealth',shorten >=1pt,auto,node distance=1.3cm,
main node/.style={circle,draw,inner sep=1.5pt, outer sep=0pt,font=\sffamily\footnotesize\bfseries}]

\node[main node] (1) {$i_0$};
\node[main node] [right of=1] (2) {$\, c_{}^{\,}$};
\node[main node] [above of=1] (3) {$i_1$};
\node [right of=3] (9) {};
\node[main node] [right of=9] (4) {$\ell_{}^{\,}$};
\node[main node] [left of=3] (5) {$\, t_{}^{\,}$};
\node[main node] [below of=1] (6) {$i_2$};
\node [right of=6] (10) {};
\node[main node] [right of=10] (7) {$u_{}^{\,}$};
\node[main node] [left of=6] (8) {$b_{}^{\,}$};
 
\path[every node/.style={font=\sffamily\tiny}]

(1) edge [very thick] node {} (2)
(5) edge [very thick] node {} (8)
(5) edge node {} (3)
(8) edge node {} (6)
(3) edge [dashed] node {} (4)
(6) edge [dashed] node {} (7)
(4) edge [very thick] node {} (7)
(1) edge node {} (3)
(1) edge node {} (6);
\end{tikzpicture}
\end{center}
\caption{\footnotesize The Coxeter diagram of the reflection group $\Gamma$ of $Q$. If two nodes are joined by a thin, thick, or dashed edge, then the two corresponding bounding hyperplanes %of $\matH^4$
meet with angle $\pi/3$, are tangent at infinity, are ultraparallel, respectively. There is no edge joining two nodes if the corresponding hyperplanes are orthogonal.}\label{fig:coxeter_Q}
\end{figure}
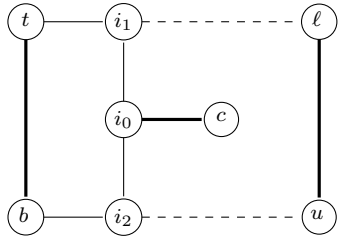

The Coxeter diagram in Figure \ref{fig:coxeter_Q} represents a non-compact hyperbolic Coxeter 4-polytope $Q \subset \matH^4$ of finite volume \cite[Proposition 13.1]{KS}, called $Q_{t_3}$ in \cite{KS}.

The associated reflection group
$$\Gamma = \langle \, t, \, b, \, u, \, \ell, \, c, \, i_0, \, i_1, \, i_2 \, \rangle < \Isom(\matH^4)$$
is arithmetic \cite[Theorem 13.2]{KS}. 

The symbols $t$, $b$, $u$, $\ell$, $c$, and $i$ are chosen to remind the words ``top'', ``bottom'', ``uper'', ``lower'', ``central'', and ``internal'', respectively. This may help later. %in the following sections.

%Let $\Sym_n$ denote the symmetric group.
Recall that the path graph with $n-1$ vertices is a Coxeter diagram for the symmetric group $\Sym_n$ %with $(12), \ldots, (n-1,n)$ as generators
\cite[Table 1]{V}. In the following sections, we will be interested in the edge $I$ of $Q$ corresponding to the reflection subgroup \begin{equation} \label{eq:G_I}
G_I = \langle i_0, i_1, i_2 \rangle \cong \Sym_4,
\end{equation}
and its vertices $V,V' \in I$, called \emph{top} and \emph{bottom} vertices, corresponding to the %super-
reflection groups
\begin{equation} \label{eq:G_V}
G_V = \langle i_0, i_1, i_2, t \rangle \cong \Sym_5, \quad G_{V'} = \langle i_0, i_1, i_2, b \rangle \cong \Sym_5.
\end{equation}

We conclude the section distinguishing the commensurability class of $Q$. %with the computation of a commensurability invariant of $Q$%, t compared it with that of the other arithmetic classes \eqref{it:1}, \eqref{it:2} and \eqref{it:3}.

\begin{prop} \label{prop:commens}
The arithmetic orbifold $Q = \matH^4 / \Gamma$ does not belong to any of the commensurability classes \eqref{it:1}, \eqref{it:2}, \eqref{it:3}, \eqref{it:4} mentioned in the introduction.
\end{prop}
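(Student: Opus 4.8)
The plan is to use the arithmetic invariants attached to $Q$. Since $Q$ is arithmetic and non-compact, its invariant trace field is $\matQ$, and by Maclachlan's description \cite{Mac} the commensurability class of an arithmetic, non-cocompact hyperbolic $4$-orbifold is determined by a quaternion algebra over $\matQ$ (equivalently, by a quadratic form over $\matQ$ in $6$ variables of the appropriate signature and discriminant, up to similarity), which in turn is pinned down by its ramification set — a finite set of primes (possibly including $\infty$). So the proof reduces to computing this invariant for $Q$ and for one representative of each of the four classes \eqref{it:1}--\eqref{it:4}, and checking that $Q$'s invariant differs from all four.

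Concretely, the first step is to read off from the Coxeter diagram in Figure \ref{fig:coxeter_Q} the Gram matrix of $\Gamma$, and from it the quadratic form $f_Q$ (of signature $(4,1)$) such that $\Gamma$ is commensurable with a subgroup of $\Or(f_Q;\matZ)$; this is routine from \cite{V}. Then I would extract Maclachlan's invariant: either the associated quaternion algebra over $\matQ$ (via the even Clifford algebra / the Witt invariant of $f_Q$), or equivalently the pair consisting of the square class of the discriminant of $f_Q$ and its Hasse--Witt invariant, recorded as a set of ramified primes. The second step is to do the same for the known classes: class \eqref{it:1} is represented by the nine non-compact Coxeter simplices / the ideal $24$-cell, class \eqref{it:2} by the ideal rectified simplex, class \eqref{it:3} by the Coxeter polytope behind \cite{MR,RS}, and class \eqref{it:4} is non-arithmetic so it is excluded immediately (an arithmetic orbifold is never commensurable with a non-arithmetic one). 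For \eqref{it:1} and \eqref{it:2} these invariants are already in the literature (e.g.\ the quadratic forms for the simplices and the $24$-cell are classical), and for \eqref{it:3} one takes the form from the defining polytope in \cite{KS} or \cite{MR}.

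The third step is the comparison: exhibit a prime $p$ (or the place $\infty$) that ramifies for $f_Q$ but not for the forms of \eqref{it:1}, \eqref{it:2}, \eqref{it:3}, or vice versa. Given the explicit diagram, I expect $f_Q$ to involve a prime — plausibly $2$ or $3$ or $5$, given the $\pi/3$ angles and the appearance of $\Sym_5$ — distinguishing it from the very ``small'' forms attached to the simplices and the $24$-cell. Since there are only three arithmetic classes to rule out and each comparison is a finite check of Hilbert symbols, no single comparison should be delicate once the invariant of $Q$ is in hand.

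The main obstacle is purely computational bookkeeping: correctly normalising the quadratic form $f_Q$ (the Gram matrix of a Coxeter polytope is only defined up to scaling rows/columns, so one must rescale to land on a form over $\matZ$ in the right similarity class), and then computing its Hasse--Witt invariant / ramification set without sign or square-class errors. A secondary subtlety is making sure one uses the \emph{invariant} (wide commensurability) version of Maclachlan's classification, so that passing between $\Gamma$, the manifold $M$, and $\Or(f_Q;\matZ)$ does not change the invariant. Once those are handled carefully, the proof is a direct comparison of finite sets of primes.
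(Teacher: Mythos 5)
Your approach is exactly the paper's: compute Maclachlan's commensurability invariant (the ramification set of the Vinberg quadratic form) for $\Gamma$, compare with the known invariants of classes \eqref{it:1}--\eqref{it:3}, and discard \eqref{it:4} immediately as non-arithmetic. The paper finds the ramification set of $\Gamma$ to be $\{2,7\}$, versus $\emptyset$, $\{3,\infty\}$, $\{2,5\}$ for \eqref{it:1}, \eqref{it:2}, \eqref{it:3} (quoted from \cite{RS}), so the four classes are pairwise distinct.

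One concrete gap in your first step: you cannot ``read off from the Coxeter diagram in Figure~\ref{fig:coxeter_Q} the Gram matrix of $\Gamma$.'' The diagram records angles and tangency/ultraparallelism, but for the two dashed edges (ultraparallel facet pairs) the Gram entry is $-\cosh$ of the hyperplane distance, and that distance is \emph{not} encoded in the diagram. In fact $Q$ sits in a one-parameter family $Q_t$ from \cite{KS}, all with the same Coxeter diagram, and only for special parameter values is the group arithmetic; here $Q = Q_{t_3}$ with $t_3 = \sqrt{7}/7$. So you must pull the explicit normal vectors (or the parameter value) from \cite{KS} as the paper does, after which the Gram matrix has off-diagonal entries $\sqrt{7}/2$ at the dashed edges, the rational form diagonalises to $\diag(7/2,\,1/2,\,1/6,\,-3/8,\,1/8)$, and the Hasse/ramification computation proceeds as you outline. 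With this input supplied, the rest of your plan is correct and matches the paper.
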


\begin{proof}
By Maclachlan's work \cite{Mac}, the ramification set of the quadratic form associated to $\Gamma$ is a commensurability invariant of arithmetic reflection groups. We compute this set as explained in \cite{GJK}. For a quick description of the computation without proofs, one may consult \cite[Section 4.5]{MR}.

Up to isometry of $\matH^4$ in its hyperboloid model, the bounding hyperplanes of the polytope $Q \subset \matH^4$ (coherently oriented) are dually represented by these spacelike vectors of $\matR^{1,4}$:
\begin{eqnarray*}
t = \big( \sqrt{2}, 1, 1, 1, \sqrt7 \, \big), & & b = \big( \sqrt{2}, 1, 1, -1, -\sqrt7 \, \big),\\
u = \big( \sqrt{2}, 1, 1, 1, -\sqrt7/7 \, \big), & & \ell = \big( \sqrt{2}, 1, 1, -1, \sqrt7/7 \, \big),\\
c = \big( 1, \sqrt{2}, 0, 0, 0 \big), & & i_0 = \big( 0, -1, 1, 0, 0 \big),\\
i_1 = \big( 0, 0, -1, -1, 0 \big), & & i_2 = \big( 0, 0, -1, 1, 0 \big)
\end{eqnarray*}
(see \cite[Section 4]{KS} for $i_0$, $i_1$, $i_2$, and \cite[Table 3]{KS} with $t = t_3 = \sqrt7/7$ for the remaining vectors). The Gram matrix of the corresponding vectors of unit Minkowski norm is: 
$$\left(\begin{array}{rrrrrrrr}
1 & -1 & 0 & 0 & 0 & 0 & 0 & -\frac{1}{2} \\
-1 & 1 & 0 & 0 & 0 & 0 & -\frac{1}{2} & 0 \\
0 & 0 & 1 & -1 & 0 & 0 & 0 & -\frac{\sqrt7}{2} \\
0 & 0 & -1 & 1 & 0 & 0 & -\frac{\sqrt7}{2} & 0 \\
0 & 0 & 0 & 0 & 1 & -1 & 0 & 0 \\
0 & 0 & 0 & 0 & -1 & 1 & -\frac{1}{2} &
-\frac{1}{2} \\
0 & -\frac{1}{2} & 0 & -\frac{\sqrt7}{2} & 0
& -\frac{1}{2} & 1 & 0 \\
-\frac{1}{2} & 0 & -\frac{\sqrt7}{2} & 0 & 0
& -\frac{1}{2} & 0 & 1
\end{array}\right).$$

For some basis, a matrix representing the associated quadratic form over $\matQ$ is:
$$\left(\begin{array}{rrrrr}
7 & 0 & 0 & \frac{7}{2} & 0 \\
0 & 1 & -1 & 0 & 0 \\
0 & -1 & 1 & -\frac{1}{2} & -\frac{1}{2} \\
\frac{7}{2} & 0 & -\frac{1}{2} & 1 & 0 \\
0 & 0 & -\frac{1}{2} & 0 & 1
\end{array}\right).$$
Diagonalising the form, we get the matrix $\diag \left( 7/2,\, 1/2,\, 1/6,\, -3/8,\, 1/8 \right)$.  %$\diag \left( \frac72, \frac12, \frac16, -\frac38, \frac18 \right)$. 
From this datum, with the help of \cite[Propositions 4.8, 4.13, 4.15]{GJK}, it is straightforward to compute the ramification set of the form (as done in \cite[Proposition 4.25]{MR}), which is $\{ 2, 7 \}$.

The ramification sets of the arithmetic commensurability classes \eqref{it:1}, \eqref{it:2} and \eqref{it:3} are instead $\emptyset$, $\{ 3, \infty \}$ and $\{ 2, 5 \}$, respectively (see \cite[Proposition 2.5]{RS} and the references therein), while \eqref{it:4} is non-arithmetic. So the five classes are distinct.
\end{proof}

\subsection{The bigger polytope} \label{sec:P}

%Let $\Sym_4$ denote the symmetric group of order $4!$.
%Let $\Sym_n$ denote the symmetric group.
Recall Formula \eqref{eq:G_I}. By reflecting the Coxeter polytope $Q$ around its edge $I$, %corresponding to the reflection group $$G_I = \langle i_0, i_1, i_2 \rangle \cong \Sym_4,$$
we get a bigger polytope
$$P = \bigcup_{g \in G_I} g(Q) \subset \matH^4$$
tessellated by $4!=24$ copies of $Q \cong P/G_I$. %The group $G_I$ is indeed isomorphic to the symmetric group $\Sym_4$.
%
%The polytope $P$ was introuced in \cite{KS} and studied in \cite{MR}.

We resume here all the needed information on $P$% by means of the following figures and facts. We refer
, referring to \cite{KS,MR} for proofs and further details (see especially \cite[Proposition 3.16]{MR}).

%First of all, a
A remarkable property of $P$ is its antipodal symmetry. We call \emph{antipodal map} the inversion $a \in \Isom(P)$ through the centre of $P$ (the midpoint of the edge $I$ of $Q$). %is the centre of $P$, and The inversion $a$ through this point preserves $P$. We call \emph{antipodal map} this symmetry $a \in \Isom(P)$.

The polytope $P$ has %\emph{antipodal} symmetry %$a \in \Isom^+(P)$ with $\Fix_{\matH^4}(a) = \{ p \}$ the centre of $P$
%$a$ (an orientation-preserving involution fixing only the centre of $P$),
%and
22 facets, partitioned up to symmetry of $P$ into 3 sets:\footnote{In \cite{KS,MR} these are called: the ``odd'' and ``even'' ``positive walls'', the ``even'' and ``odd'' ``negative walls'', and the ``letter walls'', respectively.} 
\begin{itemize}
\item[(E)] the \emph{extremal facets}, divided into \emph{top} and \emph{bottom} facets
$$E_1, E_2, E_3, E_4\ \mbox{and}\ \, E'_1, E'_2, E'_3, E'_4,$$
tessellated by the copies of the facet $t$, resp. $b$, of $Q$, where $E'_i = a(E_i)$;
\item[(H)] the \emph{half-height facets}, divided into \emph{upper} and \emph{lower} facets
$$H_1, H_2, H_3, H_4\ \mbox{and}\ \, H'_1, H'_2, H'_3, H'_4,$$
tessellated by the copies of the facet $u$, resp. $\ell$, of $Q$, where $H'_i = a(H_i)$;
\item[(C)] the \emph{central facets}
$$C_{12}, C_{13}, C_{14}, C_{23}, C_{24}, C_{34},$$
tessellated by the copies of the facet $e$ of $Q$, where $C_{ij}$ is the unique such facet of $P$ that intersects the top facets $E_i$ and $E_j$. %(It follows that $a(C_{ij}) = C_{kl}$ for all distinct $i, j, k, l$.)
%(In particular $a(C_{\{i,j\}}) = C_{\{i,j\}^\mathrm{c}}$.)
%Note in particular that, for all distinct $i, j, k, l$, we have \begin{equation} \label{eq:a}
%a(C_{i,j}) = C_{k,l}.
%\end{equation} 
\end{itemize}

\begin{figure}
\begin{tikzpicture}[scale=.773,line cap=round,line join=round,>=triangle 45,x=1.0cm,y=1.0cm]
\clip(-0.18,-0.21) rectangle (18.74,5.49);
\fill[line width=0pt,color=red,fill=red,fill opacity=1.0] (2.47,2.9) -- (1.72,1.61) -- (2.26,1.3) -- (2.26,0.69) -- (3.74,0.69) -- (3.74,1.3) -- (4.28,1.61) -- (3.53,2.9) -- (3,2.59) -- cycle;
\draw [line width=1pt] (0,0)-- (1.72,1.61);
\draw [line width=1pt] (1.72,1.61)-- (2.26,1.3);
\draw [line width=1pt] (0,0)-- (2.26,0.69);
\draw [line width=1pt] (2.26,0.69)-- (2.26,1.3);
\draw [line width=1pt] (3,5.2)-- (2.47,2.9);
\draw [line width=1pt] (2.47,2.9)-- (3,2.59);
\draw [line width=1pt] (3,5.2)-- (3.53,2.9);
\draw [line width=1pt] (3.53,2.9)-- (3,2.59);
\draw [line width=1pt] (6,0)-- (4.28,1.61);
\draw [line width=1pt] (4.28,1.61)-- (3.74,1.3);
\draw [line width=1pt] (3.74,0.69)-- (3.74,1.3);
\draw [line width=1pt] (6,0)-- (3.74,0.69);
\draw [line width=1pt] (1.72,1.61)-- (2.47,2.9);
\draw [line width=1pt] (3.53,2.9)-- (4.28,1.61);
\draw [line width=1pt] (3.74,0.69)-- (2.26,0.69);
\draw [line width=1pt,color=yellow] (2.26,1.3)-- (3,1.73);
\draw [line width=1pt,color=yellow] (3,1.73)-- (3.74,1.3);
\draw [line width=1pt,color=yellow] (3,1.73)-- (3,2.59);
\draw [line width=1pt] (10.09,5.2)-- (10.81,2.15);
\draw [line width=1pt] (10.81,2.15)-- (13.09,0);
\draw [line width=1pt] (10.09,5.2)-- (9.37,2.15);
\draw [line width=1pt] (7.09,0)-- (10.09,0.91);
\draw [line width=1pt] (10.09,0.91)-- (13.09,0);
\draw [line width=1pt] (7.09,0)-- (9.37,2.15);
\draw [line width=1pt,color=red] (9.37,2.15)-- (10.09,1.73);
\draw [line width=1pt,color=red] (10.09,1.73)-- (10.81,2.15);
\draw [line width=1pt,color=red] (10.09,1.73)-- (10.09,0.91);
\draw [line width=1pt] (7.09,0)-- (10.09,5.2);
\draw [line width=1pt] (10.09,5.2)-- (13.09,0);
\draw [line width=1pt] (13.09,0)-- (7.09,0);
\draw [line width=1pt] (6,0)-- (3,5.2);
\draw [line width=1pt] (3,5.2)-- (0,0);
\draw [line width=1pt] (0,0)-- (6,0);
\draw [line width=1pt] (13.97,4.92)-- (18.31,4.92);
\draw [line width=1pt] (18.31,4.92)-- (18.31,0.58);
\draw [line width=1pt] (18.31,0.58)-- (13.97,0.58);
\draw [line width=1pt] (13.97,0.58)-- (13.97,4.92);
\draw [line width=1pt,dash pattern=on 2pt off 4pt] (13.97,4.92)-- (16.14,3.82);
\draw [line width=1pt,dash pattern=on 2pt off 4pt] (16.14,3.82)-- (18.31,4.92);
\draw [line width=1pt,dash pattern=on 2pt off 4pt] (13.97,0.58)-- (16.14,1.68);
\draw [line width=1pt,dash pattern=on 2pt off 4pt] (16.14,1.68)-- (18.31,0.58);
\draw [line width=1pt,dash pattern=on 2pt off 4pt,color=red] (16.14,3.82)-- (16.14,1.68);
\draw [line width=1pt,color=red] (15.07,2.75)-- (17.22,2.75);
\draw [line width=1pt] (13.97,4.92)-- (15.07,2.75);
\draw [line width=1pt] (15.07,2.75)-- (13.97,0.58);
\draw [line width=1pt] (17.22,2.75)-- (18.31,4.92);
\draw [line width=1pt] (17.22,2.75)-- (18.31,0.58);
\draw (0.7,5.55) node[anchor=north west] {$ E_4 $};
\draw (2.6,1.54) node[anchor=north west] {\footnotesize $E_2$};
\draw (2.1,2.44) node[anchor=north west] {\footnotesize $E_1$};
\draw (3.1,2.44) node[anchor=north west] {\footnotesize $E_3 $};
\draw (2.6,0.72) node[anchor=north west] {\footnotesize $C_{24} $};
\draw (3.7,2.8) node[anchor=north west] {\footnotesize $C_{34} $};
\draw (1.35,2.8) node[anchor=north west] {\footnotesize $C_{14} $};
\draw (1.4,1.4) node[anchor=north west] {\footnotesize $H_3 $};
\draw (3.9,1.4) node[anchor=north west] {\footnotesize $H_1 $};
\draw (2.6,3.73) node[anchor=north west] {\footnotesize $H_2 $};
\draw (3.9,4.1) node[anchor=north west] {\footnotesize $H'_4 $};
\draw (7.8,5.55) node[anchor=north west] {$ H_4 $};
\draw (9.7,2.69) node[anchor=north west] {\footnotesize $E_2 $};
\draw (9.05,1.7) node[anchor=north west] {\footnotesize $E_3 $};
\draw (10.97,4.1) node[anchor=north west] {\footnotesize $E'_4 $};
\draw (8.5,2.8) node[anchor=north west] {\footnotesize $C_{23} $};
\draw (10.8,2.8) node[anchor=north west] {\footnotesize $C_{12} $};
\draw (9.7,0.72) node[anchor=north west] {\footnotesize $C_{13} $};
\draw (10.3,1.7) node[anchor=north west] {\footnotesize $E_1 $};
\draw (12.7,5.55) node[anchor=north west] {$ C_{12} $};
\draw (16.2,2.7) node[anchor=north west] {\footnotesize $E_1 $};
\draw (15.25,3.45) node[anchor=north west] {\footnotesize $E_2 $};
\draw (17.35,3.1) node[anchor=north west] {\footnotesize $H_3 $};
\draw (14.1,3.1) node[anchor=north west] {\footnotesize $H_4 $};
\draw (15.75,5.55) node[anchor=north west] {\footnotesize $H'_2 $};
\draw (13.28,3.15) node[anchor=north west] {\footnotesize $E'_4$};
\draw (18.1615,3.15) node[anchor=north west] {\footnotesize $E'_3$};
\draw (15.75,0.69) node[anchor=north west] {\footnotesize $H'_1 $};
\begin{scriptsize}
\fill [color=blue] (3,1.73) circle (2pt);
\fill [color=black] (1.72,1.61) circle (2pt);
\fill [color=green] (2.26,1.3) circle (2pt);
\fill [color=black] (0,0) circle (3.5pt);
\fill [color=white] (0,0) circle (2.5pt);
\fill [color=black] (2.26,0.69) circle (2pt);
\fill [color=black] (2.47,2.9) circle (2pt);
\fill [color=black] (3,5.2) circle (3.5pt);
\fill [color=white] (3,5.2) circle (2.5pt);
\fill [color=black] (3.53,2.9) circle (2pt);
\fill [color=green] (3,2.59) circle (2pt);
\fill [color=black] (4.28,1.61) circle (2pt);
\fill [color=green] (3.74,1.3) circle (2pt);
\fill [color=black] (6,0) circle (3.5pt);
\fill [color=white] (6,0) circle (2.5pt);
\fill [color=black] (3.74,0.69) circle (2pt);
\fill [color=green] (10.09,1.73) circle (2pt);
\fill [color=black] (10.81,2.15) circle (2pt);
\fill [color=black] (10.09,5.2) circle (3.5pt);
\fill [color=white] (10.09,5.2) circle (2.5pt);
\fill [color=black] (10.09,0.91) circle (2pt);
\fill [color=black] (13.09,0) circle (3.5pt);
\fill [color=white] (13.09,0) circle (2.5pt);
\fill [color=black] (7.09,0) circle (3.5pt);
\fill [color=white] (7.09,0) circle (2.5pt);
\fill [color=black] (9.37,2.15) circle (2pt);
\fill [color=black] (13.97,4.92) circle (3.5pt);
\fill [color=white] (13.97,4.92) circle (2.5pt);
\fill [color=black] (18.31,4.92) circle (3.5pt);
\fill [color=white] (18.31,4.92) circle (2.5pt);
\fill [color=black] (13.97,0.58) circle (3.5pt);
\fill [color=white] (13.97,0.58) circle (2.5pt);
\fill [color=black] (18.31,0.58) circle (3.5pt);
\fill [color=white] (18.31,0.58) circle (2.5pt);
\fill [color=black] (16.14,3.82) circle (2pt);
\fill [color=black] (16.14,1.68) circle (2pt);
\fill [color=black] (15.07,2.75) circle (2pt);
\fill [color=black] (17.22,2.75) circle (2pt);
\end{scriptsize}
\end{tikzpicture}
\caption{\footnotesize %From left to right, t
The top, upper, and central facets $E_4$, $H_4$, and $C_{12}$ of $P$. The ideal vertices are drawn as white dots, while the top vertex $V$ and the type-2 and type-3 vertices are in blue, green and black, respectively. The black, red, and yellow edges have dihedral angle $\pi/2$, $2\pi/3$, and $\mathrm{arcos}(-1/3)$, respectively. The red pentagons are $2\pi/3$-angled ridges of $P$, while the other %2-faces
2-faces in the picture are right angled.}\label{fig:facets}
\end{figure}
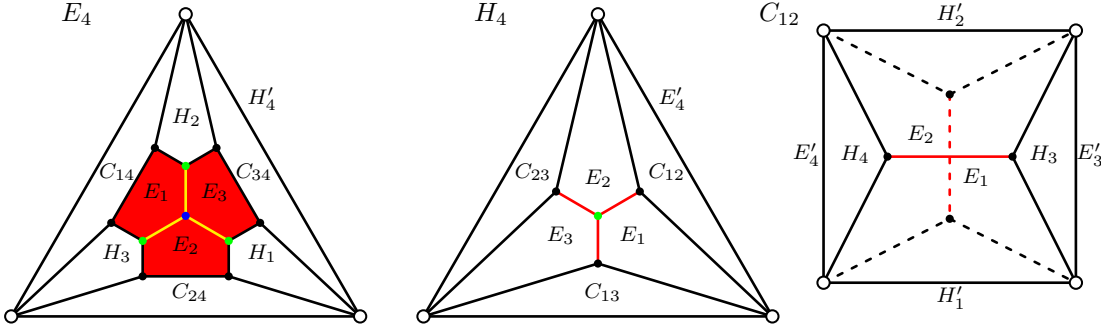

The ideal vertices of $P$ all lie in the ideal boundary of a hyperplane $H^3 \subset \matH^4$ %which cuts $P$ into two isometric polytopes.
\cite[Proposition 3.19]{MR}. By definition, the names of the facets agree on whether they are contained in one of the two half-spaces bounded by $H^3$ (``top/bottom'' and ``upper/lower'') or not (``central''). The group $G_I$ preserves the two half-spaces, while $a$ exchanges them.

With this conventions, all is well defined up to co-orientation of $H^3$ (``top/bottom'' and ``upper/lower'') and permutation of the facet indices, thanks to the following lemma.
Let $\Alt_n < \Sym_n$ denote the alternating subgroup. %The polytope $P$ has ``octahedral symmetry'':

\begin{lemma} \label{lem:symmetry_P}
%There is a canonical isomorphism
The action of $\Sym_4$ on the facet %labels of $P$
indices
is induced by an isomorphism
$$\Isom(P) \cong \matZ/2\matZ \times \Sym_4$$
which restricts to %$\langle a \rangle \cong \matZ/2\matZ$, $G_I \cong \Sym_4$, and $\Isom^+(P) \cong \matZ/2\matZ \times \Alt_4$.
$$\langle a \rangle \cong \matZ/2\matZ \times \{ \id \}, \quad G_I \cong \{0\} \times \Sym_4, \quad\mbox{and}\quad \Isom^+(P) \cong \matZ/2\matZ \times \Alt_4.$$
%onto the two factors as $\langle a \rangle \cong \matZ/2\matZ$ and $G_I \cong \Sym_4$.
%
%Every isometry between two facets of $P$ is the restriction of a symmetry of $P$.
Moreover, the quotient $P/\Isom(P)$ is isometric to the orbifold $Q/\Isom(Q)$.
\end{lemma}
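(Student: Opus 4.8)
The plan is to identify $\Isom(P)$ completely by combining two pieces of information: a subgroup we already know explicitly, and an upper bound coming from the combinatorics of the facets. On one hand, the group $G_I \cong \Sym_4$ acts on $P$ by construction (it is the reflection group around the edge $I$ that we used to build $P$ from $Q$), and by the very definition of the facet names the permutation of the indices $\{1,2,3,4\}$ induced on the top facets $E_i$ (equivalently on the central facets $C_{ij}$, on the half-height facets $H_i$, and on their antipodal images) is exactly the standard action of $\Sym_4$. The antipodal map $a$ commutes with everything in $G_I$ — because $a$ is the inversion through the midpoint of $I$, which is fixed by $G_I$ — and $a$ is not in $G_I$ since it exchanges the two half-spaces bounded by $H^3$ while $G_I$ preserves them (\cite[Proposition 3.19]{MR}). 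Hence $\langle a\rangle\times G_I \cong \matZ/2\matZ\times\Sym_4$ embeds in $\Isom(P)$, acting on the indices through the $\Sym_4$ factor.

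It remains to show there is nothing more, i.e.\ $|\Isom(P)|\le 48$. For this I would argue that any isometry $\varphi$ of $P$ is determined by its action on the set of facets, and in fact by the induced permutation of the indices together with whether it preserves or swaps the two half-spaces bounded by $H^3$. First, $\varphi$ permutes the $22$ facets and must respect the three $\Isom(P)$-invariant classes (E), (H), (C) — these are intrinsic, being distinguished e.g.\ by their combinatorial adjacencies and by which ridges carry the dihedral angle $2\pi/3$ (see Figure \ref{fig:facets}); so $\varphi$ permutes the eight extremal facets among themselves, the eight half-height facets among themselves, and the six central facets among themselves. Since the $C_{ij}$ are indexed by the $2$-element subsets of $\{1,2,3,4\}$ and their adjacency pattern reproduces that index combinatorics, the permutation $\varphi$ induces on the $C_{ij}$ comes from a permutation $\sigma\in\Sym_4$ of the indices (the outer automorphism of $\Sym_4$ sending a $3$-cycle to the product of a transposition does not preserve the ``pair'' structure, so no extra symmetry appears here). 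Composing with the element of $G_I$ realizing $\sigma^{-1}$, we may assume $\varphi$ fixes every central facet; then $\varphi$ either fixes all top facets and all upper facets, or swaps top with bottom and upper with lower — the two cases distinguished by whether $\varphi$ preserves the co-orientation of $H^3$. In the first case composing with nothing, in the second composing with $a$, we reduce to an isometry fixing all $22$ facets; such an isometry fixes the polytope pointwise (a finite-volume hyperbolic polytope has no nontrivial self-isometry fixing all its facets), hence is the identity. This proves $\Isom(P)=\langle a\rangle\times G_I\cong\matZ/2\matZ\times\Sym_4$ with the stated restrictions; the identification of $\Isom^+(P)$ follows since $a$ is orientation-preserving (an inversion through a point of $\matH^4$, dimension $4$ even) and a reflection generating $G_I\cong\Sym_4$ is orientation-reversing, so the orientation-preserving subgroup is $\langle a\rangle\times\Alt_4$.

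Finally, for the statement $P/\Isom(P)\cong Q/\Isom(Q)$: since $P=\bigcup_{g\in G_I}g(Q)$ and $G_I<\Isom(P)$, we have $P/\Isom(P)=(P/G_I)/(\Isom(P)/G_I)=Q/(\langle a\rangle\times\{\id\})$, so it suffices to see that the action of $a$ on $Q$ (after pushing forward) agrees with a symmetry of $Q$ generating $\Isom(Q)$; equivalently, that $\Isom(Q)\cong\matZ/2\matZ$ and the nontrivial element is the ``image of $a$''. This is a small additional check on the Coxeter diagram of Figure \ref{fig:coxeter_Q}: its only nontrivial symmetry is the top–bottom / upper–lower flip exchanging $t\leftrightarrow b$, $u\leftrightarrow\ell$ and fixing $c,i_0,i_1,i_2$, which is precisely the involution induced by $a$ (it exchanges the two half-spaces of $H^3$ and fixes $I$ setwise while swapping $V\leftrightarrow V'$). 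I expect the main obstacle to be making rigorous the claim that an isometry of $P$ is pinned down by its action on the facets and their $H^3$-side; this is where one must use the explicit geometry of $P$ from \cite{KS,MR} — in particular \cite[Proposition 3.16]{MR} — rather than pure combinatorics, since a priori a hyperbolic polytope could have ``hidden'' symmetries not visible from the facet incidence poset.
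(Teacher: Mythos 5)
Your overall strategy — exhibit $\langle a\rangle\times G_I\cong\matZ/2\matZ\times\Sym_4$ inside $\Isom(P)$, then cap $|\Isom(P)|$ at $48$ combinatorially, then deduce the last claim by quotienting through $G_I$ — is sound, and the lower bound, the orientation count, and the final reduction $P/\Isom(P)\cong Q/(\text{image of }a)\cong Q/\Isom(Q)$ all match the paper in substance. The route you take for the upper bound, however, is different from the paper's: the paper invokes the argument of \cite[Proposition 2.4]{RS}, which bounds $\Isom(P)$ by restricting to the link of the top vertex $V$ (a regular spherical tetrahedron, whose isometry group is $\Sym_4$), and then uses that $\Isom(Q)\cong\matZ/2\matZ$ is already realised as $a$ composed with an element of $G_I$. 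Your facet-incidence argument is a legitimate alternative, and arguably more self-contained than a citation, but the vertex-link argument is cleaner because the link of $V$ is a single low-dimensional object with an obvious small symmetry group.

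There is a genuine gap in your upper-bound step as written. You claim that any $\varphi\in\Isom(P)$ induces on the six central facets a permutation coming from some $\sigma\in\Sym_4$ acting on the indices, and then kill it by composing with $r_{\sigma^{-1}}\in G_I$. This is false: by Formula \eqref{eq:a} the antipodal map $a$ itself sends $C_{ij}\mapsto C_{kl}$ (complementary pair), and no $\sigma\in\Sym_4$ induces the complementation map on $2$-element subsets of $\{1,2,3,4\}$. (Also note the central facets are \emph{pairwise disjoint}, so the ``adjacency pattern among the $C_{ij}$'' you appeal to is trivial; the pair structure is only visible through which $E_i$ and $E'_j$ each $C_{ij}$ meets.) The correct order is to first reduce to the case where $\varphi$ preserves the co-orientation of $H^3$, composing with $a$ if necessary, and only then read off $\sigma$ from the action on the top facets $E_1,\dots,E_4$; after that the residual isometry fixes all $22$ facets and is the identity. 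Finally, the parenthetical remark about ``the outer automorphism of $\Sym_4$'' is mathematically incorrect — $\Sym_4$ has no outer automorphisms — and should simply be deleted; the point you want is that $\Aut$ of the pair-incidence structure is $\Sym_4\times\matZ/2\matZ$, with the extra $\matZ/2\matZ$ being precisely the complementation realised by $a$.
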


\begin{proof}
The same argument of \cite[Proposition 2.4]{RS} applies, replacing the words ``upper tetrahedral facet'' with ``link of the top vertex %$V = E_1 \cap \ldots \cap E_4$
$V$'' %. The second statement easily follows by looking at the combinatorics of the facets of $P$
(see also \cite[Lemma 4.15]{MR}). The key point is that the unique non-trivial symmetry of $Q$ (an order-two rotation corresponding to the reflection along the edge $\{ i_0, c \}$ of the diagram in Figure \ref{fig:coxeter_Q}) writes as the composition of $a$ with an element of $G_I$. %(the composition of two communting involutions in $G_I$).
\end{proof}

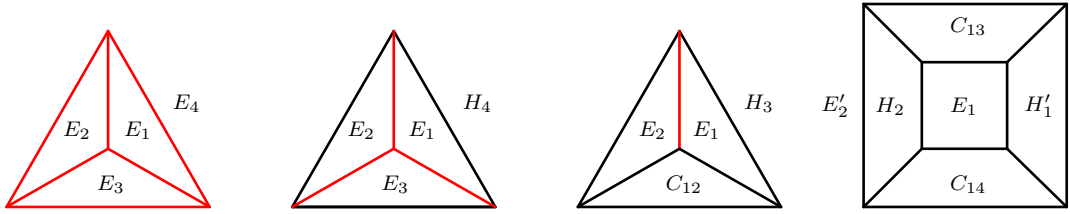
\begin{figure}
\begin{tikzpicture}[scale=1.8,line cap=round,line join=round,>=triangle 45,x=1.0cm,y=1.0cm]
\clip(-0.07,-0.06) rectangle (7.92,1.6);
\draw [line width=1pt] (6.76,1.07)-- (7.39,1.07);
\draw [line width=1pt] (7.39,1.07)-- (7.39,0.43);
\draw [line width=1pt] (7.39,0.43)-- (6.76,0.43);
\draw [line width=1pt] (6.76,0.43)-- (6.76,1.07);
\draw [line width=1pt] (4.22,0)-- (5.72,0);
\draw [line width=1pt] (5.72,0)-- (4.97,1.3);
\draw [line width=1pt] (4.97,1.3)-- (4.22,0);
\draw [line width=1pt] (3.61,0)-- (2.86,1.3);
\draw [line width=1pt] (2.86,1.3)-- (2.11,0);
\draw [line width=1pt] (2.11,0)-- (3.61,0);
\draw [line width=1pt] (2.11,0)-- (3.61,0);
\draw [color=red,line width=1pt] (1.5,0)-- (0.75,1.3);
\draw [color=red,line width=1pt] (0.75,1.3)-- (0,0);
\draw [color=red,line width=1pt] (0,0)-- (1.5,0);
\draw [line width=1pt] (6.33,1.5)-- (6.33,0);
\draw [line width=1pt] (6.33,0)-- (7.83,0);
\draw [line width=1pt] (7.83,0)-- (7.83,1.5);
\draw [line width=1pt] (7.83,1.5)-- (6.33,1.5);
\draw [line width=1pt] (6.33,1.5)-- (6.76,1.07);
\draw [line width=1pt] (6.76,0.43)-- (6.33,0);
\draw [line width=1pt] (7.39,0.43)-- (7.83,0);
\draw [line width=1pt] (7.39,1.07)-- (7.83,1.5);
\draw [color=red,line width=1pt] (0,0)-- (0.75,0.43);
\draw [color=red,line width=1pt] (0.75,1.3)-- (0.75,0.43);
\draw [color=red,line width=1pt] (0.75,0.43)-- (1.5,0);
\draw [color=red,line width=1pt] (2.11,0)-- (2.86,0.43);
\draw [color=red,line width=1pt] (2.86,0.43)-- (2.86,1.3);
\draw [color=red,line width=1pt] (2.86,0.43)-- (3.61,0);
\draw [line width=1pt] (4.22,0)-- (4.97,0.43);
\draw [color=red,line width=1pt] (4.97,0.43)-- (4.97,1.3);
\draw [line width=1pt] (4.97,0.43)-- (5.72,0);
\draw (0.6,0.3) node[anchor=north west] {\footnotesize $E_3$};
\draw (0.8,0.7) node[anchor=north west] {\footnotesize $E_1$};
\draw (0.35,0.7) node[anchor=north west] {\footnotesize $E_2$};
\draw (1.16,0.9) node[anchor=north west] {\footnotesize $E_4$};
\draw (2.7,0.3) node[anchor=north west] {\footnotesize $E_3$};
\draw (2.9,0.7) node[anchor=north west] {\footnotesize $E_1$};
\draw (2.45,0.7) node[anchor=north west] {\footnotesize $E_2$};
\draw (3.3,0.9) node[anchor=north west] {\footnotesize $H_4$};
\draw (5.0,0.7) node[anchor=north west] {\footnotesize $E_1$};
\draw (4.6,0.7) node[anchor=north west] {\footnotesize $E_2$};
\draw (4.8,0.3) node[anchor=north west] {\footnotesize $C_{12}$};
\draw (5.38,0.9) node[anchor=north west] {\footnotesize $H_3$};
\draw (6.9,0.88) node[anchor=north west] {\footnotesize $E_1$};
\draw (5.95,0.9) node[anchor=north west] {\footnotesize $E'_2$};
\draw (6.9,1.47) node[anchor=north west] {\footnotesize $C_{13}$};
\draw (6.9,0.3) node[anchor=north west] {\footnotesize $C_{14}$};
\draw (6.35,0.88) node[anchor=north west] {\footnotesize $H_2$};
\draw (7.45,0.9) node[anchor=north west] {\footnotesize $H'_1$};
%\begin{scriptsize}
%\fill [color=black] (0,0) circle (.5pt);
%\fill [color=black] (1.5,0) circle (1.5pt);
%\fill [color=black] (0.75,1.3) circle (1.5pt);
%\fill [color=black] (2.11,0) circle (1.5pt);
%\fill [color=black] (3.61,0) circle (1.5pt);
%\fill [color=black] (2.86,1.3) circle (1.5pt);
%\fill [color=black] (4.22,0) circle (1.5pt);
%\fill [color=black] (5.72,0) circle (1.5pt);
%\fill [color=black] (4.97,1.3) circle (1.5pt);
%\fill [color=black] (6.33,0) circle (1.5pt);
%\fill [color=black] (7.83,0) circle (1.5pt);
%\fill [color=black] (7.83,1.5) circle (1.5pt);
%\fill [color=black] (6.33,1.5) circle (1.5pt);
%\fill [color=black] (6.76,1.07) circle (1.5pt);
%\fill [color=black] (7.39,0.43) circle (1.5pt);
%\fill [color=black] (6.76,0.43) circle (1.5pt);
%\fill [color=black] (7.39,1.07) circle (1.5pt);
%\fill [color=black] (0.75,0.43) circle (1.5pt);
%\fill [color=black] (2.86,0.43) circle (1.5pt);
%\fill [color=black] (4.97,0.43) circle (1.5pt);
%\end{scriptsize}
\end{tikzpicture}
\caption{\footnotesize The vertex links of $P$ up to symmetry: three spherical tetrahedra for the finite vertices (from left to right, of type 1, 2 and 3), and a Euclidean %rectangular
parallelepiped for the ideal vertices. %The left-most tetrahedron is the link of the top vertex $V$, followed by a type-2 and a type-3 link.
The black edges are right angled, while the red ones are $2\pi/3$ angled.}\label{fig:vertices}
\end{figure}

Some facets of $P$ are drawn in Figure \ref{fig:facets}, and some vertex links in Figure \ref{fig:vertices}. From these two pictures and the respective captions, one recovers the whole combinatorics and geometry of $P$ (like for instance the adjacency graph of the facets) by means of Lemma \ref{lem:symmetry_P}. We record here a few consequences that will be relevant for us.

%Note for instance that
First, the top (resp. bottom) facets intersect each other with angle $2\pi/3$, %while all the other ridges of $P$ are right angled. %Moreover,
and share the top (resp. bottom) vertex $V = E_1 \cap \ldots \cap E_4$ (resp. $V' = a(V) = E'_1 \cap \ldots \cap E'_4$). Second, the central, resp. half-height, facets are pairwise disjoint. %Finally, note that we have
Third, if two non-isometric facets intersect, then they are orthogonal. Fourth, the antipodal map acts on the central facets as follows:
\begin{equation} \label{eq:a}
a(C_{i,j}) = C_{k,l} \quad \mbox{for\ all\ distinct}\ i, j, k, l.
\end{equation}

%%% CUBOTTAEDRO
%
%The section $C = P \cap \matH^3$ (the convex hull of the ideal vertices of $P$) is an ideal right-angled cuboctahedron \cite[Proposition 3.19]{MR}, drawn in Figure \ref{fig:cuboctahedron}. %The latter may be useful to visualise $P$ globally.
%
%\begin{figure}
%\includegraphics[scale=.3]{cuboctahedron}
%\caption{\footnotesize The ``central'' ideal cuboctahedron $C = P \cap \matH^3$. The triangular faces of $C$ are labelled. A triangle with label $i$ (resp. $i'$) is the ridge $E_i \cap H'_i$ (resp. $E'_i \cap _i$) of $P$. A quadrilateral face adjacent to the triangles with labels $i$ and $j$ is the section $C_{ij} \cap \matH^3$ of the central facet $C_{ij}$ of $P$.}\label{fig:cuboctahedron}
%\end{figure}

%We have an analogous of Lemma \ref{lem:symmetry_P}, with the difference that $\Isom^+(C)$ corresponds to the $\Sym_4$-factor of $\matZ/2\matZ \times \Sym_4$:
%
%\begin{lemma} \label{lem:pairing_C}
%Every isometry between two facets of $C$ is the restriction of a symmetry of $P$.
%\end{lemma}
%
%\begin{proof}
%The labels of the triangles of $C$ in Figure \ref{fig:cuboctahedron} are so that $\Isom(C)$ canonically identifies with $\matZ/2\matZ \times \Sym_4$, which in turn is identified with $\Isom(P)$ by Lemma \ref{lem:symmetry_P}. Every isometry between two facets of $C$ is the restriction of an element of $\Isom(C) = \Isom(P)$.
%\end{proof}

\section{The construction} \label{sec:manifolds}

In this section we build the manifold $M$ by gluing some copies of $P$. Recall that the top and bottom vertices of $P$ have as link the spherical tetrahedron with dihedral angles $2\pi/3$. The latter tiles $S^3$ in the regular honeycomb combinatorially equivalent to the triangulation of the boundary of the 4-simplex. Therefore we need at least 5 copies of $P$ to build a manifold, and we will see that 5 is enough.

We proceed as follows. In Section \ref{sec:step1}, we build a manifold with corners $X$ by gluing in pairs the extremal facets of 5 copies of $P$. In Section \ref{sec:X}, we study $X$ and its symmetries. In Section \ref{sec:step2}, we close $X$ up by pairing the remaining facets %of the 5 copies of $P$
via two symmetries of $X$, and conclude our proof. Finally, in Section \ref{sec:same-method}, we give some additional information on how to apply the construction to two more polytopes of Kerckhoff and Storm's.

Before beginning with the construction, let us introduce here some terminology.
%
%A \emph{hyperbolic $n$-manifold with right-angled corners} is a %manifold (with boundary)
%hyperbolic manifold with boundary, locally modelled on an orthant of $\matH^n$. The boundary is naturally stratified into maximal connected, lower-dimensional, hyperbolic manifolds with right-angled corners: called \emph{facets, corners, edges,} and \emph{vertices} if the dimension is $n-1$, $n-2$, $1$, and $0$, respectively.
%%%% embedded facets 1. 
%%The facets are \emph{embedded} if every corner is a connected component of the intersection of two facets.
%
A \emph{hyperbolic} \emph{$n$-manifold with ``pure'' right-angled corners} is a %manifold (with boundary)
hyperbolic manifold $X$ with boundary, locally modelled on the intersection in $\matH^n$ of two closed half-spaces bounded by orthogonal hyperplanes. The boundary $\partial X$ is naturally stratified into maximal connected, totally geodesic, submanifolds: the $(n-1)$-dimensional \emph{facets} have (possibly empty) totally geodesic boundary, while the $(n-2)$-dimensional \emph{corners} have no boundary.
We say that $\partial X$ is \emph{bicolourable} if there are two unions of facets $A$ and $B$ such that $\partial X = A \cup B$ and the corners of $X$ are the connected components of $A \cap B$. This implies that each corner is contained in exactly two facets: one in $A$, and one in $B$. In particular, the facets of $X$ are isometrically embedded hyperbolic manifolds with totally geodesic boundary (made of the corners of $X$).

\subsection{Extremal gluing} \label{sec:step1}

We build here %a hyperbolic 4-manifold
$X$ %with right-angled corners
by gluing isometrically in pairs all the extremal facets of 5 copies of $P$: the top ones first and the bottom ones after. There is in fact a unique way to perform the first gluing, and a preferred one for the second. 

%We start b
Recall Formula \eqref{eq:G_V}.
By reflecting the smaller Coxeter polytope $Q \subset P$ around its top vertex $V = E_1 \cap \ldots \cap E_4$,
%$V$ (the top vertex of $P$), %associated to the reflection group $$G_V = \langle t, i_0, i_1, i_2 \rangle \cong \Sym_5$$ (recall the Coxeter diagram of $\Gamma$ in	Figure \ref{fig:coxeter_Q}), to
we get a big polytope
$$\tilde P = \bigcup_{g \in G_V} g(Q) \subset \matH^4.$$
Since $G_I < G_V$ with index $5 = 5! / 4!$, the polytope $\tilde P$ contains $P$ and is tesselleted by 5 isometric copies of $P$% (labelled by the cosets of $G_I$ in $G_V$)
. These are glued altogether around $V$ (the centre of $\tilde P$), in the pattern of the triangulation of $S^3$ induced by the boundary of the 4-simplex. %(see Figure \ref{fig:K5}). 

\begin{figure}
\begin{center}
\begin{tikzpicture}[-,>=stealth' ,shorten >=1pt,auto,node distance=2cm,
main node/.style={circle,draw,inner sep=2pt, outer sep=0pt,font=\sffamily\bfseries}]

\node[main node] (2) {$2$};
\node [right of=2] (5) {};
\node[main node] [right of=5] (1) {$1$};
\node [below of=2] (6) {};
\node [right of=6] (7) {};
\node [right of=7] (8) {};
\node[main node] [right of=8] (0) {$0$};
\node[main node] [below of=6] (3) {$3$};
\node [right of=3] (9) {};
\node[main node] [right of=9] (4) {$4$};
 
\path[every node/.style={font=\sffamily\tiny}]

(0) edge [blue] node [near end, above] {$1$} (1)
(0) edge [blue] node [near start, above] {$1$} (1)
(0) edge [blue] node [near end, below] {$2$} (2)
(0) edge [blue] node [near start, below] {$2$} (2)
(0) edge [blue] node [near end, above] {$3$} (3)
(0) edge [blue] node [near start, above] {$3$} (3)
(0) edge [blue] node [near end, below] {$4$} (4)
(0) edge [blue] node [near start, below] {$4$} (4)
(1) edge node [near end, above] {$1$} (2)
(1) edge node [near start, above] {$2$} (2)
(1) edge node [near end, above] {$1$} (3)
(1) edge node [near start, above=5pt] {$3$} (3)
(1) edge node [near end, right] {$1$} (4)
(1) edge node [near start, right] {$4$} (4)
(2) edge node [near end, left] {$2$} (3)
(2) edge node [near start, left] {$3$} (3)
(2) edge node [near end, left] {$2$} (4)
(2) edge node [near start, left] {$4$} (4)
(3) edge node [near end, above] {$3$} (4)
(3) edge node [near start, above] {$4$} (4);
\end{tikzpicture}
\end{center}
\caption{\footnotesize The complete graph $K_5$, with its nodes and half-edges labelled to remind the ``more abstract'' construction of $X$ and $\tilde{P}$, which follows here. To build $X$ (resp. $\tilde{P}$), we glue the extremal (resp. top) facets of 5 abstract copies $P_0, \ldots, P_4$ of $P$ as follows. Consider the edge %$\{ i, j \}$
of $K_5$ joining the nodes $i$ and $j$, where $i < j$. If $i = 0$ (blue edge), glue the facets $E_j$  and $E'_j$ (resp. the facet $E_j$) of $P_0$ to the corresponding facets of $P_j$ via the map induced by the identity of $P$. If instead $i \neq 0$ (black edge), glue the facets $E_j$ and $E'_j$ (resp. the facet $E_j$) of $P_i$ to the facets $E_i$ and $E'_i$ (resp. the facet $E_i$) of $P_j$ via the map induced by the reflection $(ij) \in \mathfrak{S}_4 = G_I% \subset \Isom(P)
$ of $P$.}\label{fig:K5}
\end{figure}
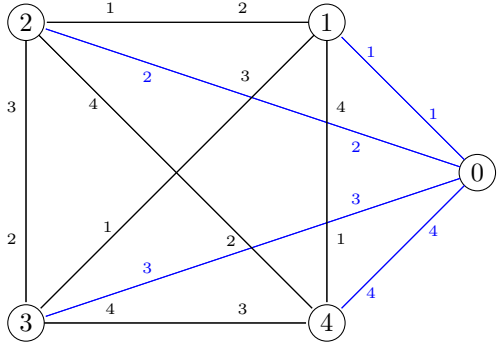

Note that we can alternatively think of $\tilde P$ as built by pairing isometrically all the top facets of 5 abstract copies of $P$ as described in Figure \ref{fig:K5}. We now build $X$ from $\tilde P$ by pairing all the bottom facets of these 5 copies of $P$ as follows.
Let $\varphi \colon F \to G$ be a pairing map between two top facets of two copies $P_x$ and $P_y$ of $P$%, and denote by $a_z \in \Isom(P_z)$ the antipodal map
. Then we %glue
want to glue
$a_x(F)$ to $a_y(G)$ via $a_y \circ \varphi \circ a_x$, where %$a_z \in \Isom(P_z)$
$a_z$ denotes the antipodal map of $P_z$. This is done in Figure \ref{fig:K5}.

More concretely, let $r_i$ be the reflection along the top face $E_i$ of $P$, and define $P_i = r_i(P) \subset \matH^4$ (so that $a_i = r_i \circ a \circ r_i$). %For $i \neq j$, let $r_{ij} \in G_I$ be the reflection corresponding to $(ij) \in \Sym_4$.
To build $X$ from
$$\tilde P = P \cup P_1 \cup \ldots \cup P_4 \subset \matH^4,$$
we glue the facet $E'_i$ of $P$ to the facet $r_i(E'_i)$ of $P_i$ via $r_i$, and, for $i \neq j$, the facet $r_i(E'_j)$ of $P_i$ to the facet $r_j(E'_i)$ of $P_j$ via %$r_{ij}$
the reflection $r_{ij} \in G_I$ corresponding to $(ij) \in \Sym_4$ 
(recall Lemma \ref{lem:symmetry_P} and see Figure \ref{fig:construction}).

\begin{figure}
\begin{tikzpicture}[scale=.15,line cap=round,line join=round,>=triangle 45,x=1.0cm,y=1.0cm]
\draw [shift={(-4.22,-8.21)},line width=1.2pt]  plot[domain=-0.52:1.57,variable=\t]({1*7.14*cos(\t r)+0*7.14*sin(\t r)},{0*7.14*cos(\t r)+1*7.14*sin(\t r)});
\draw [shift={(15,25.08)},line width=1.2pt]  plot[domain=3.67:5.76,variable=\t]({1*7.14*cos(\t r)+0*7.14*sin(\t r)},{0*7.14*cos(\t r)+1*7.14*sin(\t r)});
\draw [shift={(34.22,-8.21)},line width=1.2pt]  plot[domain=1.57:3.67,variable=\t]({1*7.14*cos(\t r)+0*7.14*sin(\t r)},{0*7.14*cos(\t r)+1*7.14*sin(\t r)});
\draw [line width=1.2pt] (15,17.95)-- (15,2.89);
\draw [line width=1.2pt] (15,2.89)-- (1.96,-4.64);
\draw [line width=1.2pt] (15,2.89)-- (28.04,-4.64);
\draw [line width=1.2pt] (-4.22,13.98)-- (-4.22,-1.07);
\draw [line width=1.2pt] (-4.22,6.08) -- (-4.69,6.45);
\draw [line width=1.2pt] (-4.22,6.08) -- (-3.75,6.45);
\draw [line width=1.2pt] (-4.22,6.83) -- (-4.69,7.21);
\draw [line width=1.2pt] (-4.22,6.83) -- (-3.75,7.21);
\draw [line width=1.2pt] (-4.22,5.32) -- (-4.69,5.7);
\draw [line width=1.2pt] (-4.22,5.32) -- (-3.75,5.7);
\draw [line width=1.2pt] (15,-19.31)-- (1.96,-11.78);
\draw [line width=1.2pt] (7.82,-15.16) -- (8.39,-14.94);
\draw [line width=1.2pt] (7.82,-15.16) -- (7.91,-15.76);
\draw [line width=1.2pt] (8.48,-15.54) -- (9.04,-15.32);
\draw [line width=1.2pt] (8.48,-15.54) -- (8.57,-16.14);
\draw [line width=1.2pt] (15,-19.31)-- (28.04,-11.78);
\draw [line width=1.2pt] (21.85,-15.35) -- (21.76,-15.95);
\draw [line width=1.2pt] (21.85,-15.35) -- (21.28,-15.13);
\draw [line width=1.2pt] (34.22,13.98)-- (21.18,21.51);
\draw [line width=1.2pt] (27.04,18.13) -- (27.61,18.35);
\draw [line width=1.2pt] (27.04,18.13) -- (27.14,17.53);
\draw [line width=1.2pt] (27.7,17.75) -- (28.27,17.97);
\draw [line width=1.2pt] (27.7,17.75) -- (27.79,17.15);
\draw [line width=1.2pt] (-4.22,13.98)-- (8.82,21.51);
\draw [line width=1.2pt] (2.63,17.94) -- (2.54,17.34);
\draw [line width=1.2pt] (2.63,17.94) -- (2.06,18.16);
\draw [line width=1.2pt] (34.22,13.98)-- (34.22,-1.07);
\draw [line width=1.2pt] (34.22,6.08) -- (33.75,6.45);
\draw [line width=1.2pt] (34.22,6.08) -- (34.69,6.45);
\draw [line width=1.2pt] (34.22,6.83) -- (33.75,7.21);
\draw [line width=1.2pt] (34.22,6.83) -- (34.69,7.21);
\draw [line width=1.2pt] (34.22,5.32) -- (33.75,5.7);
\draw [line width=1.2pt] (34.22,5.32) -- (34.69,5.7);
\draw [shift={(-4.75,-8.52)}] plot[domain=-0.22:1.27,variable=\t]({1*2.23*cos(\t r)+0*2.23*sin(\t r)},{0*2.23*cos(\t r)+1*2.23*sin(\t r)});
\draw [shift={(34.75,-8.52)}] plot[domain=-0.22:1.27,variable=\t]({-1*2.23*cos(\t r)+0*2.23*sin(\t r)},{0*2.23*cos(\t r)+1*2.23*sin(\t r)});
\draw [shift={(15,25.69)}] plot[domain=-0.22:1.27,variable=\t]({0.5*2.23*cos(\t r)+-0.87*2.23*sin(\t r)},{-0.87*2.23*cos(\t r)+-0.5*2.23*sin(\t r)});
\draw [dotted] (-4.56,-8.41)-- (36.37,15.22);
\draw [dotted] (15,25.47)-- (15,-21.79);
\draw [dotted] (34.56,-8.41)-- (-6.37,15.22);
\draw (-3.92,-6.93)-- (-4.09,-6.39);
\draw (-4.09,-6.39)-- (-3.66,-6.19);
\draw (33.92,-6.93)-- (34.09,-6.39);
\draw (34.09,-6.39)-- (33.66,-6.19);
\draw (14.04,24.18)-- (13.48,24.06);
\draw (13.48,24.06)-- (13.53,23.58);
\draw (14.5,-5.5) node[anchor=north west] {$P$};
\draw (-10,24) node[anchor=north west] {$\tilde P \subset \matH^4$};
\draw (14.5,2) node[anchor=north west] {\tiny $V$};
\draw (14.5,-19) node[anchor=north west] {\tiny $V'$};
\draw (19.5,10.55) node[anchor=north west] {$P_i$};
\draw (4,8) node[anchor=north west] {$P_j$};
\draw (-2.8,-7.8) node[anchor=north west] {\footnotesize $r_j$};
\draw (32.1,-7.8) node[anchor=north west] {\footnotesize $r_i$};
\draw (15.74,24.17) node[anchor=north west] {\footnotesize $r_{ij}$};
\draw (18.5,-0.35) node[anchor=north west] {\tiny $E_i$};
\draw (7.8,-0.35) node[anchor=north west] {\tiny $E_j$};
\draw (18.5,-12) node[anchor=north west] {\tiny $E'_j$};
\draw (7.8,-12) node[anchor=north west] {\tiny $E'_i$};
\fill [color=blue] (15,2.89) circle (13pt);
\fill [color=blue] (-4.22,13.98) circle (13pt);
\fill [color=blue] (15,-19.31) circle (13pt);
\fill [color=blue] (34.22,13.98) circle (13pt);
\end{tikzpicture}

\vspace{-.4cm}

\caption{\footnotesize A schematic picture of the ``more concrete'' construction of $X$ from $\tilde{P} = P \cup P_1 \cup \ldots \cup P_4 \subset \matH^4$.} \label{fig:construction}
\end{figure}
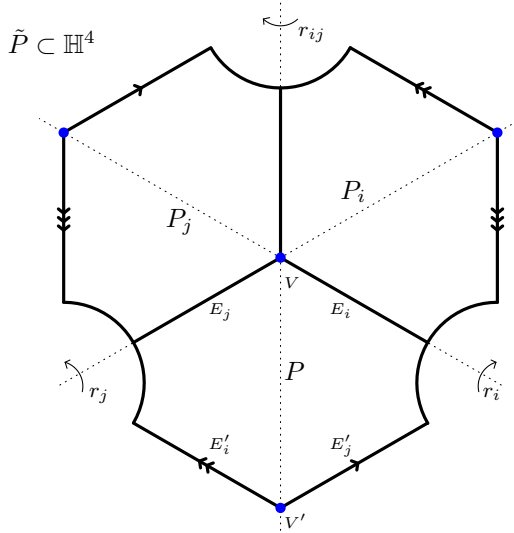

\subsection{The manifold with corners} \label{sec:X}

We study here the %manifold $X$ and its boundary.
space $X$ just constructed.
%Let us begin with a local analyisis (also giving a global information).
%

Let $H$ and $C$ denote the union of the copies of the half-height and central facets of $P$ in $X$, respectively%, so that
%$$\partial X = H \cup C.$$
. These facets are the unpaired ones.

\begin{prop} \label{prop:X}
The resulting complex $X$ is an orientable hyperbolic $4$-manifold with pure right-angled corners and boundary $\partial X = H \cup C$ bicoloured by $H$ and $C$.
\end{prop}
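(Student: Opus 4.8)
The plan is to verify three things in turn: that $X$ is a manifold (not just a pseudomanifold), that its metric completion carries a hyperbolic structure with pure right-angled corners, that it is orientable, and that the boundary decomposition $\partial X = H \cup C$ is a bicolouring. The local model of $P$ is the intersection of half-spaces of $\matH^4$; away from the glued extremal facets the space $X$ is manifestly a hyperbolic manifold with right-angled corners, so everything reduces to checking the gluing locus. First I would record, using Lemma \ref{lem:symmetry_P} and the combinatorial data in Figures \ref{fig:facets} and \ref{fig:vertices}, which faces of $P$ of each codimension lie on the extremal facets $E_i, E'_i$: by the third listed consequence the extremal facets meet the half-height and central facets orthogonally, and two distinct extremal facets $E_i, E_j$ meet along a ridge with dihedral angle $2\pi/3$ (and share the vertex $V$), and similarly for the bottom ones. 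The gluing maps are isometries of $\matH^4$ (the $r_i$ and $r_{ij}$), so $X$ inherits a path metric that is hyperbolic in the interior of each stratum; the only question is the cone/dihedral angle around the codimension-$2$ and codimension-$3$ strata created by the identifications.

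The key computation is the link of the top vertex $V$ (and, via the antipodal map, of $V'$). The link of $V$ in $P$ is the spherical tetrahedron $T$ with all dihedral angles $2\pi/3$ described in Section \ref{sec:manifolds}; its four faces are the links of $E_1,\dots,E_4$. The five copies $P_0,\dots,P_4$ contribute five copies of $T$, glued along their faces exactly in the pattern of the boundary of the $4$-simplex — this is precisely the content of the "abstract" description via $K_5$ in Figure \ref{fig:K5} and of $\tilde P = \bigcup_{g\in G_V} g(Q)$, since $G_V\cong\Sym_5$ acts on the five copies of $P$ and the five faces meeting $V$ as $\Sym_5$ acts on $\partial\Delta^4$. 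Because $T$ with $2\pi/3$ angles tiles $S^3$ in this $\partial\Delta^4$ honeycomb (ten such tetrahedra tile $S^3$, but five of them — one "half" — is what appears here around $V$ after the top gluing; the bottom gluing around $V'=a(V)$ completes the picture), the link of $V$ in $X$ is the round $S^3$, and likewise for $V'$. Hence $X$ is a genuine manifold at these two special points. I expect this vertex-link analysis — keeping straight which faces get identified and confirming the total solid angle is exactly that of $S^3$ with no cone singularity — to be the main obstacle; everything else is bookkeeping.

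It remains to check the lower-codimension strata and the global properties. Around a ridge of $E_i\cap E_j$ (angle $2\pi/3$), three copies of this $2\pi/3$ wedge must come together under the identifications to total $2\pi$: this follows because the gluing pattern on extremal facets is governed by $\Sym_4 = G_I$ acting as on $\partial\Delta^3$, so three polytopes share each such ridge, just as three $2\pi/3$-angled triangles tile $S^1$; since $E_i\cap E_j$ meets $H$ and $C$ orthogonally, the resulting stratum is smooth and the right-angled-corner model persists along it. Faces of $P$ meeting a single extremal facet orthogonally are paired by an isometry and contribute no singularity. This shows $X$ is a hyperbolic manifold with pure right-angled corners whose corners are the codimension-$2$ totally geodesic submanifolds coming from the right-angled intersections $E_i\cap H$ and $E_i\cap C$ — but after gluing these survive as the intersections $H\cap C$. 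Thus every corner lies in one facet of $H$ and one of $C$, so $\partial X = H\cup C$ is bicoloured. Orientability follows since $\matH^4$ is orientable, $P$ has a fixed orientation, and the $5$ copies $P_i=r_i(P)$ together with the gluing maps can be chosen so that all identifications are orientation-reversing between oppositely-oriented copies (equivalently, one tracks that the element of $\Sym_5\cong G_V$ realizing the $\partial\Delta^4$-pattern is compatible with a coherent orientation); more directly, $X$ is commensurable with the orientable orbifold $\matH^4/\Gamma$ via the construction, and a short check of the gluing maps' determinants confirms orientability. Finally, finiteness of volume is inherited from $P$, so $X$ is complete of finite volume. $\square$
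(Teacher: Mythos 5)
Your approach mirrors the paper's: analyse vertex links to verify the manifold-with-corners structure, then read off the bicolouring from the stratification. However, three points need correction or completion.

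First, a factual error in the crucial top-vertex computation: the regular spherical tetrahedron with dihedral angle $2\pi/3$ tiles $S^3$ with \emph{five} copies (not ten), in the pattern of $\partial\Delta^4$. Your parenthetical ``(ten such tetrahedra tile $S^3$, but five of them --- one `half' --- is what appears here around $V$\dots; the bottom gluing around $V'$ completes the picture)'' is confused: the five tetrahedral links around $V$ already close up to a full round $S^3$, and the bottom gluing produces a \emph{separate} $S^3$ around the identified copies of $V'$. The two vertices are unrelated once the gluing is done; there is no ``half'' to complete. The conclusion (link of $V$ is $S^3$) is right, but as written the justification is wrong.

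Second, you verify types 1 (vertex $V$, $V'$) and the $2\pi/3$-angled ridges $E_i\cap E_j$, but the remaining finite vertex types and, more importantly, the \emph{ideal} vertices are not examined. The type-2 links must glue to half-spheres (giving facet interiors), the type-3 links to right-angled bigons (giving corner points), and the ideal vertex links --- Euclidean parallelepipeds in Figure \ref{fig:vertices} --- must glue to flat $3$-manifolds with right-angled corners; the paper shows these are $[0,1]^2$-bundles over $S^1$. ``Finiteness of volume is inherited from $P$'' does not substitute for this: one must check that the cusp cross-sections of $X$ are honest flat manifolds with pure right-angled corners, not orbifolds, to conclude $X$ has the claimed structure at infinity.

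Third, the identification of the corners is wrong as stated. The corners of $X$ do not come from the ridges $E_i\cap H_j$ or $E_i\cap C_{kl}$ of $P$: after pairing the extremal facets these become interior points of the facets of $X$. The corners of $X$ are exactly (images of) the ridges $H_j\cap C_{kl}$ of $P$, which is what the type-3 link analysis detects (a right-angled bigon whose two edges come from one $H$-face and one $C$-face). That is the reason each corner is bordered by one facet in $H$ and one in $C$, and hence the reason the decomposition $\partial X = H\cup C$ is a bicolouring. Your sentence ``the right-angled intersections $E_i\cap H$ and $E_i\cap C$ --- but after gluing these survive as the intersections $H\cap C$'' conflates the two and should be replaced by the correct statement.
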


\begin{proof}
We analyse the effect of the gluing on the vertex links of the copies of $P$ along their extremal facets. %By %Lemma \ref{lem:symmetry_P}
%symmetry, it suffices to look at Figure \ref{fig:vertices}.

%We refer to Figure \ref{fig:vertices} from left to right.
We refer to Figures \ref{fig:vertices} and \ref{fig:K5}. By construction, the gluing graphs of the type-$k$ vertex links correspond to the subgraphs of $K_5$ spanned by $6-k$ nodes (repeated twice to deal with the antipodal vertices). Precisely, the type-1 links are glued so as to tessellate two copies of $S^3$ (thus giving points in the interior of $X$) like the boundary of the 4-simplex, the type-2 links tessellate some copies of a half-sphere of $S^3$ (giving internal points of the facets of $X$), and the type-3 links tessellate some copies of a right-angled bigon of $S^3$ (giving points of the corners of $X$). The links of the ideal vertices are glued to form flat 3-manifolds with right-angled corners, stratum-preserving homeomorphic to $[0,1]^2$-bundles over $S^1$% by construction
(giving the cusps of $X$).

We have shown that the vertex links glue to form spherical or flat 3-manifolds with pure right-angled corners, so $X$ is a hyperbolic 4-manifold with pure right-angled corners. It is orientable, being obtained from $\tilde P$ via orientation-reversing gluing maps.

By construction $\partial X = H \cup C$ and $H$ and $C$ are unions of facets. The previous analysis also shows that the corners are contained in the intersection of copies of a half-height and a central facet of $P$ (see the gluing of the type-3 vertex links). Therefore $H$ and $C$ define a bicolouring of $\partial X$, and this concludes the proof.
\end{proof}

It is not difficult to show that $H$ consists of two disjoint facets (an ``upper'' one and a ``lower'' one), while $C$ is a facet. We omit the proof since this fact is not needed.

Recall now Lemma \ref{lem:symmetry_P} and Formulas \eqref{eq:G_I} and \eqref{eq:G_V}. In the sequel, we shall think of $\Sym_4 \subset \Sym_5$ as the stabiliser of $0$ in the permutation group $\Sym_5$ of $\{ 0, \ldots, 4 \}$. 

\begin{lemma} \label{lem:sym_X}
Every symmetry of $P$ is the restriction of an isometry of $X$.
\end{lemma}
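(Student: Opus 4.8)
The plan is to exhibit each symmetry of $P$ as extending over $X$ by using the very explicit model $\tilde P = P \cup P_1 \cup \ldots \cup P_4 \subset \matH^4$ and the description of the gluing in terms of the reflections $r_i$ and $r_{ij}$. By Lemma \ref{lem:symmetry_P}, $\Isom(P) \cong \matZ/2\matZ \times \Sym_4$ is generated by the antipodal map $a$ together with the group $G_I \cong \Sym_4$ of reflections of $P$ permuting the facet indices. Since a composition of isometries of $X$ is an isometry of $X$, it suffices to show that $a$ and the transpositions $r_{ij} \in G_I$ (which generate $\Sym_4$) each extend to isometries of $X$. First I would deal with $G_I$: an element $g \in G_I$ permuting the indices via $\sigma \in \Sym_4$ should be realised on $X$ by the isometry that sends the copy $P_i$ to the copy $P_{\sigma(i)}$ (with $P_0 = P$ fixed, under the convention $\Sym_4 = \Stab(0) < \Sym_5$), acting on each piece by the appropriate conjugate of $g$. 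One checks this is well defined by verifying it is compatible with the gluing maps, i.e.\ it carries the face $E'_i$ of $P$ to $E'_{\sigma(i)}$, the face $r_i(E'_j)$ of $P_i$ to $r_{\sigma(i)}(E'_{\sigma(j)})$ of $P_{\sigma(i)}$, etc., intertwining the reflections $r_i, r_{ij}$ correctly. This is really the statement that the abstract recipe of Figure \ref{fig:K5} is $\Sym_4$-equivariant, where $\Sym_4 < \Sym_5$ acts by relabelling the nodes $1, \ldots, 4$ of $K_5$ (fixing $0$).

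Next I would handle the antipodal map $a$. The construction of $X$ was arranged precisely so that $a$ survives: the bottom facets were glued via $a_y \circ \varphi \circ a_x$ whenever the corresponding top facets were glued via $\varphi$. So the natural candidate extension of $a$ is the map $\tilde a$ on $\tilde P$ that acts on each piece $P_z$ by its own antipodal map $a_z$ and fixes the decomposition into pieces (each $P_z$ is $a_z$-invariant). By the defining compatibility of the top and bottom gluings, $\tilde a$ descends to an isometry of $X$, and it restricts to $a$ on the distinguished copy $P = P_0$. One should also note that $\tilde a$ is orientation-reversing on each $P_z$ in even dimension? — actually in dimension $4$ the inversion through a point is orientation-preserving, which is consistent with $a \in \Isom^+(P)$ from Lemma \ref{lem:symmetry_P}; either way this does not affect the argument, it only needs to be an isometry.

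Finally, combining the two cases: since $a$ and the transpositions in $G_I$ generate $\Isom(P)$, and each extends to an isometry of $X$, and the extensions compose, every element of $\Isom(P)$ extends. The main obstacle I anticipate is purely bookkeeping: carefully checking that the proposed extension of a generator $g \in G_I$ respects \emph{all} the gluing identifications of Figure \ref{fig:K5} simultaneously — i.e.\ that for the blue edges ($i = 0$) and the black edges ($i \neq 0$) the relabelled gluing map agrees with the image under $g$ of the original one, using that $g r_{ij} g^{-1} = r_{\sigma(i)\sigma(j)}$ inside $G_I$ and that $g$ conjugates $r_i$ to $r_{\sigma(i)}$. Once the equivariance of the recipe is spelled out, well-definedness on $X$ (the quotient of $\tilde P$, or of $\bigsqcup_z P_z$, by these identifications) is automatic, and the local model — a hyperbolic isometry near interior points, a spherical or Euclidean isometry of the glued vertex links near boundary and cusp points (cf.\ the analysis in the proof of Proposition \ref{prop:X}) — shows the extension is genuinely an isometry of $X$ as a manifold with corners.
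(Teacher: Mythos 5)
Your proposal is correct and takes essentially the same approach as the paper: the antipodal map $a$ extends because the bottom-facet gluings were constructed $a$-equivariantly, and each $g \in G_I \cong \Sym_4$ extends because the $K_5$ gluing recipe of Figure \ref{fig:K5} is equivariant under $\Sym_4 \subset \Sym_5 \cong \Aut(K_5)$. You simply spell out the bookkeeping (sending $P_i \mapsto P_{\sigma(i)}$, conjugation of the $r_i$ and $r_{ij}$) that the paper compresses into a single sentence.
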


\begin{proof}
This is true for $a$ thanks for the ``$a$-equivariance'' of the gluing maps producing $X$, and for each $\sigma \in \Sym_4 \subset \Sym_5 \cong \Aut(K_5)$ since it induces a permutation ($\sigma$ itself!) of the half-edge labels of the graph $K_5$ in Figure \ref{fig:K5}. 
\end{proof}

Since moreover $G_V \cong \Sym_5$ permutes the 5 copies of $P$ in $X$% (via $r_i$ and $r_{ij}$)
, we can write:
$$\Isom(P) = \matZ/2\matZ \times \Sym_4 \subset \matZ/2\matZ \times \Sym_5 \subset \Isom(X).$$
In the following section, we will implicitly use Lemma \ref{lem:sym_X} and adopt the above convention.

\subsection{Half-height and central gluing} \label{sec:step2}

We are finally ready to build $M$ from $X$.%, by gluing the unpaired facets of the 5 copies of  $P$.

Let $r_{ij} \in \Isom(X)$ %denote the isometry corresponding
correspond to the reflection $(ij) \in \Sym_4 \subset \Isom(P)$, and recall that $\partial X = H \cup C$% is bicoloured by $H$ and $C$
. Now pick $X$, glue $H$ to itself via $h = a \circ r_{12}$ and $C$ to itself via $c = a \circ r_{34}$, and call $M$ the resulting space. %(Here $r_{ij} \in G_I \subset \Isom(X)$ corresponds to $(0, (ij)) \in \Sym_4 \times \matZ/2\matZ$.)

\begin{prop} \label{prop:M}
The resulting complex $M$ is an orientable hyperbolic $4$-manifold with $\chi(M) = 2$, commensurable with the orbifold $Q = \matH^4/\Gamma$.
\end{prop}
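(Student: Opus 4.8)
The plan is to verify that $M$ is a genuine hyperbolic manifold by checking the usual two conditions on the gluing maps $h$ and $c$, then compute $\chi(M)$ from $\chi(P)$ and the number of copies, and finally exhibit the commensurability with $Q$. First I would observe that, since $\partial X = H \cup C$ is bicoloured by $H$ and $C$ (Proposition~\ref{prop:X}), the two facets $H$ and $C$ are each preserved by the isometries $a$, $r_{12}$ and $r_{34}$ of $X$: indeed $a$ fixes the family of half-height facets and of central facets setwise, and $r_{ij} \in \Sym_4$ permutes the \emph{indices} of the facets within each family, hence preserves $H$ and $C$ as sets. Therefore $h = a \circ r_{12}$ genuinely maps $H$ to $H$ and $c = a \circ r_{34}$ genuinely maps $C$ to $C$. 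Next I would check that $h$ and $c$ are fixed-point free involutions. That $h^2 = \id$ follows because $a$ commutes with every element of $\Sym_4$ in $\Isom(P) = \matZ/2\matZ \times \Sym_4$, so $h^2 = a^2 r_{12}^2 = \id$, and likewise $c^2 = \id$. For fixed-point freeness one argues on the level of the copies $P_0, \dots, P_4$: the element $r_{12}$ (resp.\ $r_{34}$) acts on the five copies as the permutation $(12) \in \Sym_5$ (resp.\ $(34)$), while $a$ preserves each copy but is itself fixed-point free on $P$; a point of $H$ (resp.\ $C$) in a given copy is therefore sent by $h$ (resp.\ $c$) either into a different copy, or — when it lies in a copy fixed by the transposition, i.e.\ $P_0$, $P_3$ or $P_4$ for $h$, and $P_0$, $P_1$ or $P_2$ for $c$ — to its antipodal image, which is distinct from it. One must also check that the gluing is compatible with the right-angled corner structure: since both $h$ and $c$ are isometries of $\partial X$ permuting the corners (each corner being an intersection of a copy of a half-height and a central facet), gluing $H$ to itself by $h$ identifies corners in pairs, the two right-angled dihedral sheets fitting together to total angle $\pi$ along the image of each corner in the partially glued space; after the second gluing of $C$ by $c$ all corners have been smoothed and $M$ is a hyperbolic manifold without boundary. (Here the commutation $h$ and $c$ — both lie in the abelian group $\langle a \rangle \times \langle r_{12}\rangle \times \langle r_{34}\rangle$ — guarantees the two gluings are carried out independently and the order does not matter, as noted in the introduction.)

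For the Euler characteristic: $M$ is tessellated by $5 \cdot |\Sym_4| = 5 \cdot 24 = 120$ copies of the Coxeter polytope $Q$, so by multiplicativity of $\chi$ under the orbifold covering and the fact that $\chi(Q)=\chi(\matH^4/\Gamma)$ one gets $\chi(M) = 120 \cdot \chi(Q)$. From Kerckhoff--Storm (the volume of $P$ is $(2/5)\cdot(4\pi^2/3)$ and $P$ is tiled by $24$ copies of $Q$) together with Gauss--Bonnet $\Vol = (4\pi^2/3)\chi$, one reads off $\chi(Q) = (2/5)/24 = 1/60$, whence $\chi(M) = 120 \cdot (1/60) = 2$. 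Alternatively one can count cells directly in the tessellation of $M$ by copies of $Q$, but going through the volume is cleanest.

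Finally, for the commensurability statement: $M$ is built by gluing copies of $P$, and $P$ is itself tiled by copies of the Coxeter polytope $Q = \matH^4/\Gamma$; hence $M$ is tessellated by copies of $Q$, which exactly says that $M$ admits $\matH^4/\Gamma$-structure in the orbifold sense, i.e.\ $\pi_1(M)$ (regarded in $\Isom(\matH^4)$ via the developing map) is a finite-index torsion-free subgroup of a group commensurable with $\Gamma$. Concretely, the $120$ copies of $Q$ assemble so that the universal cover of $M$ is $\matH^4$ with deck group a subgroup $\Gamma_M < \Isom(\matH^4)$ of index $120$ in (a conjugate of) $\Gamma$; thus $M$ and $Q$ are commensurable. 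I expect the main obstacle to be the verification that $h$ and $c$ are fixed-point free and that the corner sheets match up correctly after gluing — this requires carefully tracking, via Figures~\ref{fig:vertices}, \ref{fig:K5} and the combinatorics of the half-height and central facets of $P$, exactly which copies of which facets get identified and checking no point (in particular no point of a corner, where two sheets meet) is sent to itself. Orientability of $M$ then follows since $X$ is orientable (Proposition~\ref{prop:X}) and the gluing maps $h = a \circ r_{12}$ and $c = a \circ r_{34}$ are each orientation-reversing on $\partial X$ (both $a$ and a reflection $r_{ij}$ reverse orientation on $\matH^4$, and on the boundary each such gluing map reverses the co-orientation as required for an orientable result).
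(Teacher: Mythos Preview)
Your overall strategy matches the paper's, but two of the key verifications contain actual errors, not just imprecision.

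\textbf{Fixed-point freeness.} You assert that ``$a$ preserves each copy but is itself fixed-point free on $P$'', and then that on a copy fixed by the transposition $(12)$ the map $h$ sends a point ``to its antipodal image''. Both claims are false. The antipodal map $a$ is by definition the inversion through the centre of $P$, so it fixes that centre; and on a copy $P_k$ with $k\in\{0,3,4\}$ the map $h=a\circ r_{12}$ still acts as $a\circ r_{12}$, not as $a$, since $r_{12}$ restricts to a nontrivial reflection of that copy. The paper's argument is different and is what actually makes the step go through: one observes that $a\circ r_{12}$ is the composition of a point-inversion with a hyperplane reflection through that point, hence its fixed set in $\matH^4$ is a geodesic \emph{line}. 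One then checks (using the facet combinatorics and Formula~\eqref{eq:a}) that this line joins two \emph{ideal} vertices of $P$ through its centre, so it lies in the interior of $P\subset X$ and misses $\partial X$ entirely. The same analysis works for $c$. Without this, you have not shown that $h|_H$ and $c|_C$ are fixed-point free.

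\textbf{Orientability.} You write that ``both $a$ and a reflection $r_{ij}$ reverse orientation on $\matH^4$''. In even dimension a point-inversion is orientation-\emph{preserving}; indeed Lemma~\ref{lem:symmetry_P} records $\langle a\rangle\subset\Isom^+(P)\cong\matZ/2\matZ\times\Alt_4$. With your parity count, $h=a\circ r_{12}$ would be orientation-preserving and the conclusion would fail. The correct count is: $a$ preserves, $r_{12}$ (an odd permutation) reverses, so $h$ and $c$ reverse, which is what is needed.

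\textbf{Commensurability.} A tessellation of $M$ by copies of $Q$ does not by itself give $\pi_1(M)<\Gamma$: the gluing maps $h,c$ involve the symmetry $a$, which is \emph{not} a product of facet-reflections of $Q$. The paper instead notes that all gluing maps lie in $\Isom(P)$, so $M$ covers $P/\Isom(P)\cong Q/\Isom(Q)$ (Lemma~\ref{lem:symmetry_P}); since $Q$ also covers this orbifold, $M$ and $Q$ are commensurable. Your index-$120$ claim should be replaced by this.
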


\begin{proof}
Observe that $h,c \in \Isom(P)$ are both the composition of an inversion through a point with a reflection through a hyperplane containing the point, so their fixed-point set is a line. In both cases, this line joins two ideal vertices of $P$:
$$\overline{E_1} \cap \overline{E'_2} \cap \overline{H_2} \cap \overline{H'_1} \cap \overline{C_{13}} \cap \overline{C_{14}} \quad \mbox{and} \quad \overline{E_2} \cap \overline{E'_1} \cap \overline{H_1} \cap \overline{H'_2} \cap \overline{C_{23}} \cap \overline{C_{24}},$$
$$\overline{E_3} \cap \overline{E'_4} \cap \overline{H_4} \cap \overline{H'_3} \cap \overline{C_{13}} \cap \overline{C_{23}} \quad \mbox{and} \quad \overline{E_4} \cap \overline{E'_3} \cap \overline{H_3} \cap \overline{H'_4} \cap \overline{C_{14}} \cap \overline{C_{24}},$$
respectively, as it is easily checked by Lemma \ref{lem:symmetry_P} and Formula \eqref{eq:a}. Therefore the fixed-point sets of $h,c \in \Isom(X)$ are contained in the interior of $P \subset X$.

Since $H$ and $C$ define a bicolouring of $\partial X$ (Proposition \ref{prop:X}) and the gluing maps $h$ and $c$ are two distinct, commuting, fixed-point free, isometric involutions of $\partial X$, it follows that $M$ is a hyperbolic manifold: near the points corresponding to %the symmetric difference $H \triangle C$
%%$(H \sm C) \cup (C \sm H)$
%$\partial X \sm (H \cap C)$
the internal points of the facets of $X$
because $h$ and $c$ are fixed-point free isometric involutions of $\partial X$, and near the points corresponding to %$H \cap C$
the corners of $X$ because moreover $h$ and $c$ are distinct and commute (so each corner cycle has length 4 and trivial return map).

%Thus $M$ is a hyperbolic 4-manifold. It
The manifold $M$ is orientable because %$X$ is orientable and
$h$ and $c$ are orientation reversing by Lemma %s
\ref{lem:symmetry_P} %and \ref{lem:sym_X}
(recall that $X$ is orientable by Proposition \ref{prop:X}).

The orbifold Euler characteristic of $Q$ is $\chi^\orb(Q) = 1/60$ (for a quick check one may use %CoxIter online
\cite{coxiter}), and $M$ is tessellated into $5! = 120$ copies of $Q$. Therefore $\chi(M) = 120/60 = 2$. %by the generalised Gauss--Bonnet formula.

Finally, %recall from Lemma \ref{lem:symmetry_P} that every isometry between two facets of $P$ is the restriction of a symmetry of $P$. Since moreover $M$ is tessellated by copies of $P$,
note that the gluing maps used to build $M$ from the copies of $P$ are induced by symmetries of $P$. Therefore $M$ covers the orbifold $P / \Isom(P) \cong Q / \Isom(Q)$ (recall Lemma \ref{lem:symmetry_P})%. T
, and so $M$ and $Q$ are commensurable%, and t
. %The proof is complete.
\end{proof}

%The proof of the Theorem follows from Propositions \ref{prop:commens} and \ref{prop:M}.
We conclude the paper with some additional information.

\begin{rem} \label{rem:non-or}
One builds a few non-orientable manifolds with $\chi = 2$ in the same commensurability class, just by choosing  as gluing maps %for $H$ and $C$
$h$ and $c$ other pairs of distinct, commuting, isometric involutions of $X$ with no fixed point on $\partial X$, without requiring they both reverse the orientation. Any two among $a$, $a \circ r_{12}$, $a \circ r_{34}$ and $a \circ r_{12} \circ r_{34}$ work.
\end{rem}

\subsection{More Kerckhoff--Storm polytopes} \label{sec:same-method} 

The method applies to two additional Coxeter polytopes of Kerckhoff and Storm's with the same combinatorics of $Q$, called $Q_{t_4}$ and $Q_{t_5}$ in \cite{KS}.
One lies in the commensurability class \eqref{it:1} and has $\chi^\orb = 5/192$, while the other one is non-arithmetic and has $\chi^\orb = 241/7200$ (see \cite[Theorem 13.2]{KS} and \cite[Propositions 3.22 and 4.25]{MR}).

The main difference is that now the %red ridges
pentagonal faces of $P$ have dihedral angle $\pi/2$ (resp. $2\pi/5$) in place of $2\pi/3$, therefore the link of the top vertex $V$ tessellates $S^3$ like the boundary of the regular 16-cell (resp. 600-cell) $R$ in place of the simplex $\Delta$. So, to build $X$, one has to glue 16 (resp. 600) copies of $P$ in place of 5. This time $\Sym_4$ acts on $P \subset X$ like a facet stabiliser in %the symmetry group of %the 16-cell (resp. 600-cell)
%$R$, in place of that of the simplex, $\Sym_5$.
$\Isom(R)$, in place of $\Isom(\Delta) \cong \Sym_5$.

The analogous construction gives an orientable %hyperbolic 4-
manifold $M$ with $\chi = 10$ (resp. $%\chi =
482$). Moreover, in this case, $M$ has an orientation-preserving isometric involution $\iota$ without fixed points, so the quotient $M / \langle \iota \rangle$ is a twice-smaller orientable manifold. One gets $\iota$ by composing the map induced by $a$ with that induced by the antipodal map of %the 16-cell (resp. 600-cell)
$R$ (a symmetry that %the simplex
$\Delta$ does not enjoy!).

In conclusion, one gets two more cusped, orientable, hyperbolic 4-manifolds: one in the class \eqref{it:1} of $\matH^4/\mathrm{PO}(4,1;\mathbb Z)$ with $\chi = 5$, and one non-arithmetic with $\chi = 241$.
%
%We omit any further detail, since the additional contribuition of the paper would be negligible in any case.

\end{document}